\newcommand{\R}{\mathbb{R}}
\newcommand{\C}{\mathbb{C}}
\newtheorem{thm}{Theorem}[section]
\newtheorem{lem}[thm]{Lemma}
\newtheorem{prop}[thm]{Proposition}
\newtheorem{defn}[thm]{Definition}
\DeclareMathOperator{\rank}{rank}
\DeclareMathOperator{\range}{range}
\DeclareMathOperator{\tr}{tr}
\DeclareMathOperator{\nullspace}{nullspace}
\newcommand{\tphi}{\tilde{\phi}}
\newcommand{\tpsi}{\tilde{\psi}}
\newcommand{\tsigma}{\tilde{\sigma}}
\newcommand{\nuB}{\nu( \sqrt{
I - \Lambda(1)} B \sqrt{I - \Lambda(1)})=(f,Bf)}
\DeclareMathOperator{\re}{Re}
\author{Christopher Jankowski} \thanks{Supported by the Skirball Foundation via
the Center for Advanced Studies in Mathematics at Ben-Gurion
University of the Negev.}
\address{ \newline Department of Mathematics \newline
Ben-Gurion University of the Negev\newline P.O. Box 653 \newline
Be'er Sheva 84105, Israel} \email{cjankows@math.bgu.ac.il}
\title[Non-cocycle conjugate $E_0$-semigroups]{A family of non-cocycle conjugate 
$E_0$-semigroups obtained from boundary weight doubles}
\begin{document} \maketitle

\begin{abstract}
We have seen that if $\phi: M_n(\C) \rightarrow M_n(\C)$ is a unital
$q$-positive map and $\nu$ is a type II Powers weight, then the boundary weight
double $(\phi, \nu)$ induces a unique (up to conjugacy) type II$_0$
$E_0$-semigroup. Let $\phi: M_n(\C) \rightarrow M_n(\C)$ and $\psi:
M_{n'}(\C) \rightarrow M_{n'}(\C)$ be unital rank one
$q$-positive maps, so for some states $\rho \in M_n(\C)^*$ and $\rho'
\in M_{n'}(\C)^*$, we have $\phi(A)=\rho(A)I_n$ and $\psi(D) = \rho'(D)I_{n'}$
for all $A \in M_n(\C)$ and $D \in M_{n'}(\C)$.  We find that if
$\nu$ and $\eta$ are arbitrary type II Powers weights, then $(\phi, \nu)$
and $(\psi, \eta)$ induce non-cocycle conjugate $E_0$-semigroups
if $\rho$ and $\rho'$ have different
eigenvalue lists.  We then completely classify
the $q$-corners and hyper maximal $q$-corners from $\phi$
to $\psi$, obtaining the following result:  If $\nu$ is a type
II Powers weight of the form
$\nu(\sqrt{I - \Lambda(1)} B \sqrt{I - \Lambda(1)})=(f,Bf)$,  then
the $E_0$-semigroups induced by $(\phi,
\nu)$ and $(\psi, \nu)$ are cocycle conjugate if and only if $n=n'$
and $\phi$ and $\psi$ are conjugate.
\end{abstract}

\section{Introduction}
An $E_0$-semigroup $\alpha=\{\alpha_t\}_{t\geq 0}$ is a
semigroup of unital $*$-endomorphisms of $B(H)$ which is weakly continuous
in $t$.  $E_0$-semigroups are divided into three types, depending on the
existence and structure of their units. More specifically, if
$\alpha$ is an $E_0$-semigroup acting of $B(H)$ and there is a strongly
continuous
semigroup $U=\{U_t\}_{t \geq 0}$ of bounded operators acting on $H$ such that
$\alpha_t(A)U_t=U_tA$ for all $A \in B(H)$ and $t \geq 0$, then we
say that $U$ is a unit for $\alpha$. An $E_0$-semigroup is
said to be spatial if it has at least one unit, and a spatial $E_0$-semigroup
is called completely spatial if, in essence, its
units can reconstruct $H$.  We say an $E_0$-semigroup $\alpha$ is
type I if it is completely spatial and type II if it is spatial but
not completely spatial. If $\alpha$ has no units, we say it is of
type III.  Every spatial $E_0$-semigroup $\alpha$ is assigned an index $n \in
\mathbb{Z}_{\geq 0} \cup \{\infty\}$ which corresponds to the
dimension of a particular Hilbert space associated to its units.
The type I $E_0$-semigroups are classified up to cocycle
conjugacy by their index:  If $\alpha$ is of type I$_n$ (type I,
index $n$) for $n \in \mathbb{N} \cup \{\infty\}$, then $\alpha$ is
cocycle conjugate to the CAR flow of rank $n$ (\cite{arvindex}),
while if $\alpha$ is of type I$_0$, then it is a semigroup of
$*$-automorphisms.

However, uncountably many examples of non-cocycle
conjugate $E_0$-semigroups of types II and III are known (see, for example,
\cite{izumisri}, \cite{izumi}, \cite{bigpaper}, \cite{typeII}, \cite{typeIII},
and \cite{T2}). Bhat's dilation theorem (\cite{Bhat}) and developments in the theory
of $CP$-flows (\cite{hugepaper} and \cite{bigpaper}) have led to the
introduction of boundary weight doubles and related cocycle
conjugacy results for $E_0$-semigroups in \cite{Me}.
A boundary
weight double is a pair $(\phi, \nu)$, where $\phi: M_n(\C)
\rightarrow M_n(\C)$ is $q$-positive (that is, $\phi(I + t
\phi)^{-1}$ is completely positive for all $t \geq 0$)
and $\nu$ is a positive boundary weight over $L^2(0, \infty)$.  If $\phi$
is unital and $\nu$ is normalized and unbounded (in which
case we say $\nu$ is a type II Powers weight), then
$(\phi, \nu)$ induces a unital $CP$-flow whose Bhat minimal dilation is
a type II$_0$ $E_0$-semigroup $\alpha^d$.  If $\phi: M_n(\C) \rightarrow M_n(\C)$ is unital and $q$-positive and $U \in M_n(\C)$
is unitary, then the map $\phi_U(A)=U^*\phi(UAU^*)U$ is also unital and $q$-positive.  
The relationship between $\phi$ and $\phi_U$ is analogous
to the definition of conjugacy for $E_0$-semigroups.  With this in mind,
we say that $q$-positive maps $\phi, \psi: M_n(\C)
\rightarrow M_n(\C)$
are \textit{conjugate} if $\psi = \phi_U$ for some unitary $U \in M_n(\C)$.
If $\nu$ is a type II Powers weight of the form $\nuB$, then $(\phi, \nu)$ and
$(\phi_U, \nu)$ induce cocycle conjugate $E_0$-semigroups (for details, 
see Proposition 2.11
of \cite{m2c} and the discussion preceding it).

Suppose $\phi: M_n(\C) \rightarrow M_n(\C)$ and $\psi: M_{n'}(\C)
\rightarrow M_{n'}(\C)$ are unital rank
one $q$-positive maps, so for some states $\rho \in M_n(\C)^*$ and
$\rho' \in M_{n'}(\C)^*$, we have $\phi(A)=\rho(A)I_n$ and
$\psi(D)=\rho'(D)I_{n'}$ for all $A \in M_n(\C), D \in M_{n'}(\C)$.
Let $\nu$ and $\eta$ be type II Powers weights.  We prove three main results. First, we find that if
$(\phi, \nu)$ and $(\psi, \eta)$ induce cocycle conjugate
$E_0$-semigroups, then $\rho$ and $\rho'$ have identical eigenvalue
lists (Definition \ref{eigvallist} and Proposition \ref{kk'}).  We
then find all $q$-corners and hyper maximal $q$-corners from $\phi$
to $\psi$ (see Remark 1 and Theorems \ref{biggie} and \ref{bigone}).  With this
result in hand, we complete the cocycle conjugacy comparison theory for
$E_0$-semigroups $\alpha^d$ and $\beta^d$ induced by $(\phi, \nu)$
and $(\psi, \nu)$ in the case that $\nu$ is of the form $\nu( \sqrt{
I - \Lambda(1)} B \sqrt{I - \Lambda(1)})=(f,Bf)$, finding that
$\alpha^d$ and $\beta^d$ are cocycle conjugate if and only if $n=n'$ and
$\phi$ is conjugate to $\psi$ (Theorem \ref{theone}).

\section{Background}

\subsection{$q$-positive and $q$-pure maps}
Let $\phi: \mathfrak{A} \rightarrow \mathfrak{B}$ be a linear map
between unital $C^*$-algebras.  For each $n \in \mathbb{N}$, define
$\phi_n: M_n(\mathfrak{A}) \rightarrow M_n(\mathfrak{B})$ by

\begin{displaymath} \phi_n \left(\begin{array}{ccc} A_{11} & \cdots & A_{1n}
\\ \vdots & \ddots & \vdots \\ A_{n1} & \cdots & A_{nn}  \end{array}\right)
= \left(\begin{array}{ccc} \phi(A_{11}) & \cdots & \phi(A_{1n})
\\ \vdots & \ddots & \vdots \\ \phi(A_{n1}) & \cdots & \phi(A_{nn})
\end{array} \right).
\end{displaymath}
We say that $\phi$ is completely positive if $\phi_n$ is positive
for all $n \in \mathbb{N}$.  From the work of Choi (\cite{choi}) and Arveson (\cite{arveson}), we know
that every normal completely positive 
map $\phi: B(H) \rightarrow B(K)$ ($H, K$ separable Hilbert spaces) can be
written in the form
$$\phi(A) = \sum_{i=1}^n S_i A S_i^*$$ for
some $n \in \mathbb{N} \cup \{\infty\}$
and bounded operators $S_i: H
\rightarrow K$ which are linearly independent over
$\ell_2(\mathbb{N})$. 

We will be interested in a particular kind of completely positive map:
\begin{defn}
Let $\phi: M_n(\C) \rightarrow M_n(\C)$ be a linear map
with no negative eigenvalues.  We say $\phi$ is \emph{$q$-positive} (and write $\phi \geq_q 0$)
if $\phi(I + t \phi)^{-1}$ is completely positive for all $t \geq 0$.
\end{defn}
We make two observations in light of this definition.  First, it is
not uncommon for a completely positive map to have negative
eigenvalues.  Second, there is no ``slowest rate of failure'' for
$q$-positivity:  For every $s \geq 0$, there exists a linear map
$\phi$ with no negative eigenvalues such that $\phi(I + t
\phi)^{-1}$ ($t \geq 0$) is completely positive if and only if $t \leq s$. These
observations are discussed in detail in section 2.1 of \cite{m2c}.  

There is a
natural order structure for $q$-positive maps.  If $\phi, \psi:
M_n(\C) \rightarrow M_n(\C)$ are $q$-positive, we say $\phi$
$q$-dominates $\psi$ (i.e. $\phi \geq_q \psi$) if $\phi(I + t
\phi)^{-1} - \psi(I + t \psi)^{-1}$ is completely positive for all
$t \geq 0$.  It is not always true that $\phi \geq_q \lambda \phi$ if $\lambda \in (0,1)$
(for a large family of counterexamples, see Theorem 6.11 of \cite{Me}).
However, if $\phi$ is $q$-positive, then for every $s
\geq 0$, we have $\phi \geq_q \phi(I + s \phi)^{-1} \geq_q 0$ 
(Proposition 4.1 of \cite{Me}).   If these are the only nonzero
$q$-subordinates of $\phi$, we say $\phi$ is \emph{$q$-pure}.  The unital
$q$-pure maps which are either rank one or invertible have been
classified (Proposition 5.2 and Theorem 6.11 of \cite{Me}).

If $\phi$ is a unital $q$-positive map, then as $t \rightarrow
\infty$, the maps $t \phi(I + t \phi)^{-1}$ converge to an idempotent
completely positive map
$L_\phi$ which has interesting properties (see Lemma 3.1 of \cite{m2c}):

\begin{lem}\label{Lphi}
Suppose $\phi: M_n(\C) \rightarrow M_n(\C)$ is $q$-positive and $||t \phi(I + t \phi)^{-1}||
< 1$ for all $t \geq 0$. Then the maps $t \phi(I + t \phi)^{-1}$
have a unique norm limit $L_\phi$ as $t \rightarrow \infty$, and $L_\phi$
is completely positive.  Furthermore,
\begin{enumerate}[(i)]
\item $\phi=\phi \circ L_\phi = L_\phi \circ \phi$,
\item $L_\phi^2= L_\phi$,
\item $\range(L_\phi)=\range(\phi)$, and
\item $\nullspace(L_\phi) = \nullspace(\phi)$.
\end{enumerate}
\end{lem}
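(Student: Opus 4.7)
The plan is to view $\phi$ as a linear operator on the finite-dimensional space $M_n(\C)$ and exploit the resolvent identity $t\phi(I+t\phi)^{-1} = I - (I+t\phi)^{-1}$, so that studying the limit of $t\phi(I+t\phi)^{-1}$ reduces to studying $\lim_{t\to\infty}(I+t\phi)^{-1}$. I would apply the Fitting decomposition $M_n(\C) = R \oplus K$, where $R = \range(\phi^k)$ and $K = \nullspace(\phi^k)$ for $k$ sufficiently large. Since $\phi$ has no negative eigenvalues, every eigenvalue $\lambda$ of $\phi|_R$ is nonzero and avoids $-1/t$ for $t \geq 0$; a direct computation with Jordan blocks then shows $(I+t\phi|_R)^{-1} \to 0$ in norm as $t \to \infty$.

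The main obstacle is handling $\phi$ restricted to $K$. Here $\phi|_K$ is nilpotent, so $(I+t\phi|_K)^{-1}$ is a finite polynomial in $t\phi|_K$, and $t\phi(I+t\phi)^{-1}|_K$ becomes a polynomial in $t$ of degree equal to the nilpotency index of $\phi|_K$. If this degree exceeded zero, then $\|t\phi(I+t\phi)^{-1}\|$ would grow unboundedly in $t$, contradicting the hypothesis $\|t\phi(I+t\phi)^{-1}\| < 1$ for all $t \geq 0$. Thus $\phi|_K = 0$, whence $K = \nullspace(\phi)$, $R = \range(\phi)$, and $(I+t\phi)^{-1}$ converges in norm to the projection $P_K$ onto $K$ along $R$. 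Setting $L_\phi := I - P_K$ then expresses $L_\phi$ as the projection onto $\range(\phi)$ along $\nullspace(\phi)$, which immediately yields (ii), (iii), and (iv).

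Complete positivity of $L_\phi$ follows because each $t\phi(I+t\phi)^{-1}$ is completely positive (from $q$-positivity of $\phi$ together with $t \geq 0$) and norm limits of CP maps remain CP. For (i), $\phi$ commutes with every $(I+t\phi)^{-1}$, and
\begin{equation*}
\phi - \phi \cdot t\phi(I+t\phi)^{-1} \;=\; \phi(I+t\phi)^{-1} \;=\; \frac{1}{t}\bigl(t\phi(I+t\phi)^{-1}\bigr),
\end{equation*}
which tends to $0$ as $t \to \infty$ because $\|t\phi(I+t\phi)^{-1}\| < 1$; thus $\phi = \phi \circ L_\phi$, and the same commutation gives $\phi = L_\phi \circ \phi$. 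The entire argument hinges on the first step: converting the norm bound $\|t\phi(I+t\phi)^{-1}\| < 1$ into the algebraic statement that $\phi$ has no nontrivial Jordan block at the eigenvalue $0$, after which the remaining properties drop out from elementary resolvent algebra in finite dimensions.
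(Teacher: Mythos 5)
Your proof is correct and self-contained. Note, however, that the present paper does not give its own proof of this lemma; it simply imports it from Lemma~3.1 of the reference \cite{m2c}, so there is no internal argument to compare against. Your route is the natural finite-dimensional one: the resolvent identity $t\phi(I+t\phi)^{-1}=I-(I+t\phi)^{-1}$, the Fitting decomposition $M_n(\C)=R\oplus K$ with $\phi|_R$ invertible and $\phi|_K$ nilpotent, the Jordan-block computation showing $(I+t\phi|_R)^{-1}\to 0$ (valid for every nonzero eigenvalue $\lambda$ since $|1+t\lambda|\to\infty$), and the key reduction that a nontrivial nilpotent part would force $\|t\phi(I+t\phi)^{-1}\|$ to grow polynomially on $K$, contradicting the uniform bound. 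That last step is the right way to turn the norm hypothesis into the statement that $\phi$ has no nontrivial Jordan block at $0$, which is exactly what makes $L_\phi$ the algebraic (not orthogonal) projection onto $\range(\phi)$ along $\nullspace(\phi)$; properties (ii)--(iv) are then immediate, complete positivity of $L_\phi$ follows from closedness of the CP cone under norm limits, and (i) follows from the clean estimate $\|\phi-\phi\circ\bigl(t\phi(I+t\phi)^{-1}\bigr)\|=\tfrac{1}{t}\|t\phi(I+t\phi)^{-1}\|<\tfrac{1}{t}$, exactly as you wrote.
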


\subsection{$E_0$-semigroups and $CP$-flows}
From a celebrated result of Wigner (\cite{wigner}), we know that every one-parameter
group $\alpha=\{\alpha_t\}_{t\in \R}$ of $*$-automorphisms of $B(H)$ arises
from a strongly continuous unitary group $\{V_t\}_{t \in \R}$ in the sense that
$\alpha_t(A)=V_t A V_t^*$ for all $t \in \R$ and $A \in B(H)$.  

\begin{defn}  Let $H$ be a separable Hilbert space.
We say a family $\alpha = \{\alpha_t\}_{t \geq 0}$ of
$*$-endomorphisms of $B(H)$ is an \emph{$E_0$-semigroup} if:
\begin{enumerate}[(i)]
\item  $\alpha_s \circ \alpha_t = \alpha_{s+t}$ for all $s,t \geq 0$
and $\alpha_0 (A)=A$ for all $A \in B(H)$;
\item  For each $f,g \in H$ and $A \in B(H)$, the inner product
$(f, \alpha_t(A)g)$ is continuous in $t$;
\item $\alpha_t(I) =I$ for all $t \geq 0$.
\end{enumerate}
\end{defn}

We have two notions of equivalence for $E_0$-semigroups:

\begin{defn}
Let $\alpha$ and $\beta$ be $E_0$-semigroups acting on $B(H_1)$ and
$B(H_2)$, respectively, are said to be \emph{conjugate} if there is
a $*$-isomorphism $\theta$ from $B(H_1)$ onto $B(H_2)$ such that
$\theta \circ \alpha \circ \theta^{-1}= \beta$.

We say $\alpha$ and $\beta$ are \emph{cocycle conjugate} if $\alpha$
is conjugate to $\beta'$, where $\beta'$ is an $E_0$-semigroup of
$B(H_2)$ satisfying the following condition:  For some strongly
continuous family of unitaries $W = \{W_t\}_{t \geq 0}$ acting on
$H_2$ and satisfying $W_t \beta_{t}(W_s)=W_{t+s}$, we have
$\beta'_t(A) = W_t \beta_t(A)W_t^*$ for all $A \in B(H_2)$ and
$t\geq 0$.
\end{defn}

Let $K$ be a separable Hilbert space, and form $H = K \otimes L^2(0,
\infty)$, which we identify with the space of 
$K$-valued measurable functions on $(0, \infty)$ which are square
integrable.  Let $U=\{U_t\}_{t \geq 0}$ be the right shift semigroup
on $H$, so for all $t \geq 0$, $f \in H$, and $x>0$, we have
$$(U_t f)(x) = f(x-t) \textrm{ if } x > t, \ \ (U_t f)(x)=0
\textrm{ if } x \leq t.$$ A strongly continuous semigroup
$\alpha=\{\alpha_t\}_{t \geq 0}$ of completely positive contractions
from $B(H)$ into itself 
is called a \textit{$CP$-flow} over $K$ if $\alpha_t(A)U_t = U_t A$
for all $A \in B(H)$ and $t \geq 0$.  A result of Bhat in \cite{Bhat}
shows that if $\alpha$ is unital, then it minimally dilates to a unique (up to conjugacy)
$E_0$-semigroup $\alpha^d$.  We may naturally construct a $CP$-flow $\beta= \{\beta_t\}_{t \geq
0}$ over $K$ using the right shift semigroup by defining
$$\beta_t(A)= U_t A U_t ^*$$ for all $A \in B(H)$, $t \geq 0$.
In fact, if $\alpha$ is any $CP$-flow over $K$, then $\alpha$
dominates $\beta$ in the sense that $\alpha_t - \beta_t$ is
completely positive for all $t \geq 0$.

Define $\Lambda: B(K) \rightarrow B(H)$ by
$$(\Lambda(A)f)(x)=e^{-x} Af(x)$$ for all $A \in B(K), f \in H,$
and $x \in (0, \infty)$, and let $\mathfrak{A}(H)$ be the algebra
$$\mathfrak{A}(H) = \sqrt{I - \Lambda(I_K)} B(H)  \sqrt{I -
\Lambda(I_K)}.$$  We say a linear functional $\tau$ acting on $\mathfrak{A}(H)$
is a \textit{boundary weight} (denoted $\tau \in \mathfrak{A}(H)_*$) if the functional
$\ell$ defined on $B(H)$ by
$$\ell(A)= \tau\Big(
\sqrt{I - \Lambda(I_K)}A\sqrt{I - \Lambda(I_K)}\Big)$$
satisfies $\ell \in B(H)_*$.  For a discussion of boundary weights and their properties, we refer
the reader to Definition 1.10 of \cite{markie} and the remarks that follow it.

Every $CP$-flow over $K$ corresponds to a \textit{boundary weight
map} $\rho \rightarrow \omega(\rho)$
from $B(K)_*$ to $\mathfrak{A}(H)_*$ (\cite{hugepaper}). On the other hand, it is an extremely important
and non-trivial fact that, under
certain conditions, a map from $B(K)_*$ to $\mathfrak{A}(H)_*$ can
induce a $CP$-flow (see Theorem 3.3 of \cite{bigpaper}):
\begin{thm}\label{powersthm}
If $\rho \rightarrow \omega(\rho)$ is a completely positive mapping
from $B(K)_*$ into $\mathfrak{A}(H)_*$ satisfying
$\omega(\rho)(I-\Lambda(I_K)) \leq \rho(I_K)$ for all positive
$\rho$, and if the maps $$\hat{\pi}_t : = \omega_t(I +
\hat{\Lambda}\omega_t)^{-1}$$ are completely positive contractions
from $B(K)_*$ into $B(H)_*$ for all $t >0$, then $\rho \rightarrow
\omega(\rho)$ is the boundary weight map of a $CP$-flow over $K$.
The $CP$-flow is unital if and only if $\omega(\rho)(I-\Lambda(I_K))
= \rho(I_K)$ for all $\rho \in B(K)_*$.
\end{thm}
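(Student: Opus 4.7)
The plan is to invert the usual correspondence between $CP$-flows and boundary weight maps: take the hypothesized maps $\hat{\pi}_t$ as the predual resolvent of a candidate generator and reconstruct the semigroup by a Hille-Yosida-type argument in the predual category.

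First I would exploit the fact that any $CP$-flow $\alpha$ over $K$ dominates the shift semigroup $\beta_t(A) = U_t A U_t^*$, so the ``boundary excess'' $\alpha_t - \beta_t$ is fed by behavior at the origin of $(0, \infty)$. Working in the predual $B(H)_*$, I would write the resolvent of the candidate generator as the sum of the resolvent of the shift generator and a correction built from $\omega$; solving the resolvent equation for this correction produces exactly the formula $\hat{\pi}_t = \omega_t(I + \hat{\Lambda}\omega_t)^{-1}$, where $\omega_t$ is the appropriate time-truncation of $\omega$ and $\hat{\Lambda}$ is the predual of $\Lambda$. Next I would verify the resolvent identity for $\{\hat{\pi}_t\}_{t>0}$ and combine it with the assumed complete positivity and contractivity to invoke a Hille-Yosida-type theorem, producing a strongly continuous semigroup of completely positive contractions on $B(H)_*$. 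Dualizing gives the candidate semigroup $\alpha_t$ on $B(H)$. The intertwining $\alpha_t(A) U_t = U_t A$ should then follow from the explicit form of $\hat{\pi}_t$, since the factors $I - \Lambda(I_K)$ defining $\mathfrak{A}(H)$ are constructed directly from the shift. Computing the boundary weight map of the constructed $\alpha$ and matching it with $\omega$ closes the loop. For the unital dichotomy, the condition $\alpha_t(I) = I$ translates in the predual into a mass-preservation identity whose boundary analogue is precisely $\omega(\rho)(I - \Lambda(I_K)) = \rho(I_K)$ for every $\rho \in B(K)_*$.

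The principal obstacle is the Hille-Yosida step: showing that the $\hat{\pi}_t$ genuinely assemble into a semigroup on $B(H)$ that remains completely positive \emph{and} intertwines the right shift is the central technical content. The resolvent identity itself requires careful manipulation of $(I + \hat{\Lambda}\omega_t)^{-1}$ and delicate bookkeeping of the interaction between the boundary weight $\omega$ and the Laplace-type operator $\Lambda$, which is the heart of Powers' construction in \cite{bigpaper}.
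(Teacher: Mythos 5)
This theorem is not proved in the paper: it is stated as a quoted result from Powers' \cite{bigpaper} (Theorem~3.3 there), so there is no in-paper argument against which to compare your sketch.

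Your outline captures the broad direction of Powers' construction --- pass to the predual, use the boundary weight data as a boundary condition correcting the shift generator, assemble a completely positive contractive semigroup, and dualize --- and you correctly flag the assembly step as where the work lives. But a specific piece of the glue is wrong as written. The parameter $t$ in $\hat{\pi}_t = \omega_t(I + \hat{\Lambda}\omega_t)^{-1}$ indexes a family of \emph{truncations} $\omega_t$ of the boundary weight, not a resolvent variable for a single generator; what matters is the behavior of this family as $t \to 0^+$, where it tends to the generalized boundary representation, not any resolvent identity in $t$. So ``verify the resolvent identity for $\{\hat{\pi}_t\}$ and invoke Hille--Yosida'' is not a step the argument can take literally. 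Powers instead uses each $\hat{\pi}_t$ to produce a $CP$-flow matching $\omega$ up to truncation, proves that the resulting flows increase in an appropriate order as $t$ decreases, and passes to the limit; complete positivity and the intertwining with $U_t$ must then be shown to survive that limit, and the unital dichotomy is read off from how much of the mass $\rho(I_K)$ the boundary weight returns. You flag these as the ``principal obstacle,'' which is an accurate self-assessment, but without them the proposal is a map of the territory rather than a crossing of it.
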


If $\alpha$ is a $CP$-flow over $\C$, then we identify its boundary
weight map $c \rightarrow \omega(c)$ with the single positive
boundary weight $\omega:=\omega(1)$, so $\omega$ has the form $$\omega(\sqrt{I - \Lambda(1)}A
\sqrt{I - \Lambda(1)}) = \sum_{i=1}^k (f_i, Af_i)$$ for some mutually
orthogonal nonzero $L^2$-functions $\{f_i\}_{i=1}^k$ ($k \in \mathbb{N} \cup
\{\infty\}$) with $\sum_{i=1}^k ||f_i||^2<\infty$.   We call $\omega$ a
\textit{positive boundary weight over $L^2(0, \infty)$}, and, following
the notation of \cite{markie}, we write $\omega \in \mathfrak{A}(L^2(0,
\infty))_*^+$.  We say $\omega$ is bounded if there exists some $r>0$
such that $|\omega(B)|\leq r ||B||$ for all $B \in \mathfrak{A}(H)$.
Otherwise, we say $\omega$ is unbounded.
 Suppose $\omega(I - \Lambda(1))=1$ (i.e. $\omega$ is \textit{normalized}), so $\alpha$ is unital and
therefore dilates to an $E_0$-semigroup $\alpha^d$. Results from
\cite{hugepaper} show that $\alpha^d$ is of type I$_k$ if $\omega$
is bounded but of type II$_0$ if $\omega$ is unbounded, leading us to make the following
definition:

\begin{defn} A boundary weight $\nu \in \mathfrak{A}(L^2(0, \infty))_*$ is called
a \emph{Powers
weight} if $\nu$ is positive and normalized.  We say a Powers weight $\nu$ is
\emph{type I}
if it is bounded and \emph{type II} if it is unbounded.
\end{defn}

We note that if $\nu$ is a
type II Powers weight, then both $\nu_t(I)$ and $\nu_t(\Lambda(1))$
approach infinity as $t \rightarrow 0+$.  We can combine unital $q$-positive
maps with type II Powers weights to obtain $E_0$-semigroups (see Proposition
3.2 and Corollary 3.3 of \cite{Me}):

\begin{prop} \label{bdryweight} Let $H =\C^n \otimes L^2(0, \infty)$.
Let $\phi: M_n(\C) \rightarrow M_n(\C)$ be a unital $q$-positive map, and let $\nu$ be a type
II Powers weight.  Let $\Omega_\nu: \mathfrak{A}(H) \rightarrow M_n(\C)$
be the map that sends $A=(A_{ij}) \in M_n(\mathfrak{A}(L^2(0, \infty))) \cong \mathfrak{A}(H)$
to the matrix $(\nu(A_{ij})) \in M_n(\C)$.  The map $\rho \rightarrow
\omega(\rho)$ from $M_n(\C)^*$ into
$\mathfrak{A}(H)_*$ defined by
$$\omega (\rho) (A) = \rho\Big(\phi(\Omega_\nu(A))\Big)$$
is the boundary weight map of a unital $CP$-flow $\alpha$ over $\C^n$ 
whose Bhat minimal dilation $\alpha^d$ is a type
II$_0$ $E_0$-semigroup.
\end{prop}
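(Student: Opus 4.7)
The plan is to verify the hypotheses of Theorem \ref{powersthm} for the given map $\rho \mapsto \omega(\rho)$, and then to invoke the dichotomy from \cite{hugepaper} that distinguishes type I$_k$ from type II$_0$ dilations via boundedness of the boundary weight.

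First, I would check that $\omega(\rho) \in \mathfrak{A}(H)_*$ and that $\rho \mapsto \omega(\rho)$ is completely positive from $M_n(\C)^*$ into $\mathfrak{A}(H)_*$. Under the identification $\mathfrak{A}(H) \cong M_n(\mathfrak{A}(L^2(0,\infty)))$, the map $\Omega_\nu$ acts entry-wise via the single boundary weight $\nu$, so on each $A=(A_{ij})$ it produces a genuine $M_n(\C)$-valued matrix; composing with the completely positive $\phi$ (any $q$-positive map is completely positive, by setting $t=0$) and then with $\rho$ yields both membership in $\mathfrak{A}(H)_*$ and complete positivity of the assignment. To confirm the unitality condition, I would observe that $I - \Lambda(I_n)$ corresponds under the identification above to the diagonal matrix with every entry equal to $I - \Lambda(1)$, so $\Omega_\nu(I - \Lambda(I_n)) = \nu(I - \Lambda(1)) I_n = I_n$ because $\nu$ is normalized. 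Since $\phi$ is unital, this gives $\omega(\rho)(I - \Lambda(I_n)) = \rho(\phi(I_n)) = \rho(I_n)$, which is the equality condition for a unital CP-flow in Theorem \ref{powersthm}.

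The heart of the argument is verifying that the cutoff resolvents $\hat{\pi}_t = \omega_t(I + \hat{\Lambda}\omega_t)^{-1}$ are completely positive contractions from $M_n(\C)^*$ into $B(H)_*$ for every $t>0$. The strategy is to unwind the definitions and exploit the fact that $\nu$ enters $\omega$ diagonally through $\Omega_\nu$: the resolvent at level $t$ should decouple into an $M_n(\C)$-level operation on $\phi$ and the scalar data $\nu_t(\Lambda(1)) \geq 0$ extracted from $\nu$. After this algebraic rearrangement, the only matrix-valued factor that appears is of the form $\phi(I + s\phi)^{-1}$ for some $s = s(t) \geq 0$, and the $q$-positivity of $\phi$ then delivers complete positivity of $\hat{\pi}_t$ directly; the contraction bound drops out of the unitality established above together with the general estimates attached to Theorem \ref{powersthm}. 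I expect this step to be the main obstacle, both because $\hat{\Lambda}$ must be handled carefully at the predual level and because the separation of matrix-valued and scalar-valued pieces must be carried out for an arbitrary type II Powers weight $\nu$, not merely for the distinguished subclass $\nuB$ that figures in the rest of the paper.

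Once the three hypotheses of Theorem \ref{powersthm} are verified, $\rho \mapsto \omega(\rho)$ is the boundary weight map of a unital CP-flow $\alpha$, and Bhat's dilation theorem \cite{Bhat} produces a minimal $E_0$-semigroup dilation $\alpha^d$. To identify its type, I would appeal to the classification in \cite{hugepaper}: since $\nu$ is type II, it is unbounded on $\mathfrak{A}(L^2(0,\infty))$, and unitality of $\phi$ prevents cancellation when passing to $\omega$, so the boundary weight map itself is unbounded. The criterion of \cite{hugepaper} then rules out type I$_k$ and forces $\alpha^d$ to be of type II$_0$, completing the proof.
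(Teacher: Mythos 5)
The paper itself does not prove Proposition \ref{bdryweight}; it simply cites Proposition 3.2 and Corollary 3.3 of \cite{Me}. So there is no in-paper argument to compare against, and I can only assess your sketch on its own merits.

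Your handling of the preliminaries is sound: $q$-positivity at $t=0$ gives complete positivity of $\phi$, hence complete positivity of $\rho\mapsto\omega(\rho)$; and the identification $\Omega_\nu(I-\Lambda(I_n))=\nu(I-\Lambda(1))I_n=I_n$ together with unitality of $\phi$ yields the equality $\omega(\rho)(I-\Lambda(I_n))=\rho(I_n)$ needed for a unital $CP$-flow. Your identification of the central step --- verifying the maps $\hat\pi_t$ are completely positive contractions, and that after unwinding $\hat\Lambda$ the matrix-valued factor that appears is $\phi(I+s\phi)^{-1}$ with $s$ running over $[0,\infty)$ so that $q$-positivity of $\phi$ does the work --- is correct, though you leave it at the level of a plan rather than carrying out the computation; that is an honest and correctly-placed admission.

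The genuine gap is in your final step. You write that unboundedness of $\omega$ ``rules out type I$_k$ and forces $\alpha^d$ to be of type II$_0$.'' Ruling out type I gives at most that $\alpha^d$ is not completely spatial; it does not give that $\alpha^d$ is spatial (ruling out type III), nor does it give index $0$ (ruling out type II$_k$ for $k>0$). The dichotomy you invoke from \cite{hugepaper} --- bounded implies type I$_k$, unbounded implies type II$_0$ --- is stated in this paper only for $CP$-flows over $\C$, i.e.\ for a single positive boundary weight over $L^2(0,\infty)$; you are working over $\C^n$, where the boundary weight map is matrix-valued and that classification is not directly applicable. To establish that $\alpha^d$ is spatial and that its index is $0$, one needs an additional argument using the structure of the boundary weight map (in particular, the interaction between unitality of $\phi$ and the unboundedness of $\nu$), which is precisely what Corollary 3.3 of \cite{Me} supplies and which your sketch presently skips.
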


In the notation of the previous proposition, we say that $\alpha^d$ is the
$E_0$-semigroup induced by the \textit{boundary weight double} $(\phi, \nu)$.

\begin{defn}  Suppose $\phi: B(H_1) \rightarrow B(K_1)$ and $\psi: B(H_2) \rightarrow
B(K_2)$ are normal completely positive maps.  Write each $A \in B(H_1 \oplus H_2)
$ as $A=(A_{ij})$, where $A_{ij} \in B(H_j, H_i)$ for each
$i,j=1,2$.  We say a linear map $\gamma:
B(H_2, H_1) \rightarrow B(K_2, K_1)$ is a \emph{corner} from $\alpha$ to
$\beta$ if $\Theta: B(H_1 \oplus H_2) \rightarrow B(K_1 \oplus K_2)$
defined by
\begin{displaymath} \Theta \left( \begin{array}{cc} A_{11} & A_{12} \\
A_{21} & A_{22} \end{array}  \right) = \left(
\begin{array}{cc} \phi(A_{11}) & \gamma(A_{12}) \\ \gamma^*(A_{21})
& \psi(A_{22}) \end{array} \right)
\end{displaymath} is normal and completely positive.

Suppose $H_1=K_1= \C^n$ and $H_2=K_2=\C^{m}$.  We say
$\gamma: M_{n,m}(\C) \rightarrow M_{n,m}(\C)$ is
a \emph{$q$-corner} from $\phi$ to $\psi$ if $\Theta \geq_q 0$.  A
$q$-corner $\gamma$ is \emph{hyper maximal} if, whenever
\begin{displaymath}
\Theta \geq_q \Theta' = \left( \begin{array}{cc} \phi' & \gamma
\\ \gamma^* & \psi' \end{array} \right) \geq_q 0, \end{displaymath}
we have $\Theta= \Theta'$.
\end{defn}

Hyper maximal $q$-corners between unital $q$-positive maps $\phi$ and $\psi$
allow us to compare $E_0$-semigroups induced
by $(\phi, \nu)$ and $(\psi, \nu)$ if $\nu$ is a particular kind
of type II Powers weight:
\begin{prop}\label{hypqc}
Let $\phi: M_n(\C) \rightarrow M_n(\C)$ and $\psi: M_k(\C)
\rightarrow M_k(\C)$ be unital $q$-positive maps, and let $\nu$ be a
type II Powers weight of the form $$\nu(\sqrt{I -
\Lambda(1)}B\sqrt{I - \Lambda(1)})=(f, Bf).$$ The boundary weight
doubles $(\phi, \nu)$ and $(\psi, \nu)$ induce cocycle conjugate
$E_0$-semigroups if and only if there is a hyper maximal $q$-corner
from $\phi$ to $\psi$.
\end{prop}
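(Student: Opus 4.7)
The plan is to exploit the correspondence between block-structured unital $q$-positive maps on $M_{n+k}(\C)$ and $E_0$-semigroups on $B(H_1 \oplus H_2)$ obtained by Bhat-dilating the resulting $CP$-flows, where $H_1 = \C^n \otimes L^2(0,\infty)$ and $H_2 = \C^k \otimes L^2(0,\infty)$. The key conceptual point is that a $q$-corner $\gamma$ from $\phi$ to $\psi$ is exactly the data needed to assemble $\phi$ and $\psi$ into a single unital $q$-positive map $\Theta$ on $M_{n+k}(\C)$, while cocycle conjugacy of $\alpha^d$ and $\beta^d$ is exactly the data needed to assemble them into a single $E_0$-semigroup on $B(H_1 \oplus H_2)$ via the block formula appearing in the preliminaries.

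For the $(\Leftarrow)$ direction, given a hyper maximal $q$-corner $\gamma$, form the unital $q$-positive block map $\Theta$ and apply Proposition \ref{bdryweight} with $\nu$ to obtain the boundary weight map of a unital $CP$-flow over $\C^{n+k}$. Because $\Omega_\nu$ acts entrywise on the block decomposition of $\mathfrak{A}(\C^{n+k}\otimes L^2(0,\infty))$, this boundary weight map respects the blocks: its diagonal restrictions recover the boundary weight maps induced by $(\phi, \nu)$ and $(\psi, \nu)$, while its off-diagonal part records $\gamma$. Bhat-dilating produces a type II$_0$ $E_0$-semigroup $\Theta^d$ whose compressions to the two corners containing $H_1$ and $H_2$ are conjugate to $\alpha^d$ and $\beta^d$. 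A cocycle intertwining $\alpha^d$ and $\beta^d$ can then be extracted from the off-diagonal blocks of $\Theta^d$, and hyper maximality of $\gamma$ is precisely what rules out any proper $q$-subordinate $\Theta'$ of $\Theta$ sharing the same off-diagonal entry; this is what forces the intertwining to be implemented by unitaries rather than merely by contractions, yielding cocycle conjugacy.

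For the $(\Rightarrow)$ direction, assume $\alpha^d$ and $\beta^d$ are cocycle conjugate. Using the intertwining unitary cocycle $\{W_t\}$ to build off-diagonal maps $\gamma_t$ between $B(H_2, H_1)$ and itself, one forms an $E_0$-semigroup $\Theta^d$ on $B(H_1 \oplus H_2)$ with $\alpha^d$ and $\beta^d$ on the diagonal. Restricting down to the $CP$-flow level and reading off the associated boundary weight map via Theorem \ref{powersthm} produces a unital $q$-positive block map on $M_{n+k}(\C)$ whose diagonal entries are $\phi$ and $\psi$; its off-diagonal block is the required $q$-corner $\gamma$. Hyper maximality follows from the $E_0$ (not merely $CP$) nature of $\Theta^d$: any proper $q$-subordinate $\Theta \geq_q \Theta'$ with the same $\gamma$ would, upon dilation, yield a proper sub-$CP$-flow of $\Theta^d$, contradicting the fact that $\Theta^d$ consists of unital endomorphisms.

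The main obstacle is making rigorous the dictionary between block structure at the boundary-weight level and block structure at the Bhat-dilation level. One must verify that the explicit form $\nu(\sqrt{I-\Lambda(1)}B\sqrt{I-\Lambda(1)}) = (f, Bf)$ is rigid enough for $\Omega_\nu$ to preserve block information faithfully, that the minimal Bhat dilation of the block $CP$-flow really decomposes into $\alpha^d$, $\beta^d$, and the cocycle in the expected way, and that the correspondence of hyper maximality with unitarity of the cocycle is exact. The assumption on the form of $\nu$ is what makes this rigidity available; once established, the biconditional reduces to a translation between the two equivalent pictures.
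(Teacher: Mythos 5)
The paper does not prove this proposition at all; it is imported verbatim from earlier work (specifically, it is the content of Proposition 4.6 of \cite{Me}, as the author alludes to elsewhere in the manuscript). So there is no ``paper's own proof'' to compare against, and what you have produced must be judged on its own merits.

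Your sketch captures the right overall shape --- a $q$-corner at the matrix level should be assembled into a block object whose Bhat dilation relates $\alpha^d$ to $\beta^d$ --- but it is missing the load-bearing intermediate layer in the actual argument, namely \emph{flow corners} between the two $CP$-flows $\alpha$ and $\beta$, and the notion of a \emph{hyper maximal} flow corner. The genuine chain of equivalences is: (1) the $E_0$-semigroups $\alpha^d$, $\beta^d$ are cocycle conjugate if and only if there is a hyper maximal flow corner from $\alpha$ to $\beta$ (this is a structural theorem about $CP$-flows from \cite{bigpaper} and \cite{hugepaper}, not something one can prove by hand-waving about ``extracting a cocycle from off-diagonal blocks of $\Theta^d$''); and (2) for a Powers weight of the stated rank-one form, hyper maximal flow corners between $\alpha$ and $\beta$ correspond exactly to hyper maximal $q$-corners between $\phi$ and $\psi$ (this is where the hypothesis on $\nu$ is actually used). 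By skipping step (1) you have quietly assumed the hardest fact in the subject. In particular, in your $(\Rightarrow)$ direction you write as if an arbitrary unitary cocycle implementing the conjugacy of $\alpha^d$ and $\beta^d$ descends to a flow-compatible object at the $CP$-flow level; that is false in general, and it is precisely what the hyper maximal flow corner machinery is designed to handle.

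A second concrete misstep: in the $(\Leftarrow)$ direction you assert that the minimal Bhat dilation of the block $CP$-flow over $\C^{n+k}$ has $\alpha^d$ and $\beta^d$ as its diagonal compressions. The minimal dilation is constructed on a quotient Hilbert space built from the entire $CP$-flow and does not a priori split along the $H_1 \oplus H_2$ decomposition; the blocks interact in the completion. Establishing that the diagonal corners are conjugate to $\alpha^d$ and $\beta^d$, and that hyper maximality of the $q$-corner forces the off-diagonal part to be implemented by a genuine unitary cocycle rather than a contractive one, is exactly the content of the flow-corner formalism you are circumventing. As written, your argument re-derives the statement of the theorem rather than proving it.
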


From \cite{Me}, we know that a unital rank one map $\phi: M_n(\C)
\rightarrow M_n(\C)$ is $q$-positive if and only if it has the form
$\phi(A)=\rho(A)I$ for a state $\rho \in M_n(\C)^*$, and that $\phi$
is $q$-pure if and only if $\rho$ is \textit{faithful}.  We also
have the following comparison result (Theorem 5.4 of \cite{Me}),
which we will extend in this paper to all unital rank one
$q$-positive maps (Theorem \ref{theone}):
\begin{thm}\label{statesbig}  Let $\phi: M_n(\C) \rightarrow M_n(\C)$ and
$\psi: M_{n'}(\C) \rightarrow M_{n'}(\C)$ be rank one unital $q$-pure
maps, so for some faithful
states $\rho \in M_n(\C)^*$ and $\rho' \in M_{n'}(\C)^*$, we have
$$\phi(A)=\rho(A)I_n \ \textrm{ and } \ \psi(D) = \rho'(D)I_{n'}$$ for
all $A \in M_n(\C)$ and $D \in M_{n'}(\C)$ .  Let $\nu$
be a type II Powers weight of the form $\nu(\sqrt{I - \Lambda(1)} B
\sqrt{I - \Lambda(1)}) = (f,Bf)$.

The boundary weight doubles $(\phi, \nu)$ and $(\psi, \nu)$
induce cocycle conjugate $E_0$-semigroups if and only if $n=n'$ and for some
unitary $U \in M_n(\C)$ we have $\rho'(A)=\rho(UAU^*)$ for all $A \in M_n(\C)$.
\end{thm}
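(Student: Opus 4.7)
The plan is to route both directions through Proposition \ref{hypqc}, which translates cocycle conjugacy of the $E_0$-semigroups induced by $(\phi,\nu)$ and $(\psi,\nu)$ into the existence of a hyper maximal $q$-corner from $\phi$ to $\psi$. The ``if'' direction is then immediate: if $n=n'$ and $\rho'(A)=\rho(UAU^*)$ for a unitary $U$, a direct calculation gives $\phi_U(A)=U^*\rho(UAU^*)I_nU=\rho(UAU^*)I_n=\rho'(A)I_n=\psi(A)$, so $\psi=\phi_U$; the cocycle conjugacy of $(\phi,\nu)$ and $(\phi_U,\nu)$ noted in the introduction completes this direction.

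For the converse, Proposition \ref{hypqc} supplies a hyper maximal $q$-corner $\gamma:M_{n,n'}(\C)\to M_{n,n'}(\C)$ which assembles with $\phi,\psi$ into a unital $q$-positive block map $\Theta$ on $M_{n+n'}(\C)$. The first step is to pin down the form of $\gamma$. Since $\phi$ and $\psi$ are themselves idempotent rank one projections onto the scalars, $L_\phi=\phi$ and $L_\psi=\psi$; Lemma \ref{Lphi} applied to $\Theta$ then produces an idempotent unital completely positive map $L_\Theta$ whose diagonal blocks are $\phi$ and $\psi$. The Kadison--Schwarz inequality $L_\Theta(A)^*L_\Theta(A)\leq L_\Theta(A^*A)$, combined with the fact that the diagonal images land in $\C I_n$ and $\C I_{n'}$, forces the off-diagonal block of $L_\Theta$ to be rank one, and hence $\gamma(B)=\tau(B)V$ for some linear functional $\tau\in M_{n,n'}(\C)^*$ and matrix $V\in M_{n,n'}(\C)$. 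Applying complete positivity of $\Theta$ to $2\times 2$ positive blocks with off-diagonal $B$ then yields the Schwarz-type bound $|\tau(B)|^2\, VV^*\leq \rho(BB^*)\rho'(B^*B)\,I_n$ together with its symmetric counterpart controlling $V^*V$.

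The main obstacle is extracting $n=n'$ and the unitary $U$ from hyper maximality, since that condition refers to arbitrary $q$-subordinates of $\Theta$ rather than mere CP-subordinates, so standard rigidity arguments do not apply directly. I would argue by contradiction: if $V$ fails to be a scalar multiple of an isometry that saturates the Schwarz bound on a dense set, one can construct a small nonzero rank one completely positive perturbation $\Delta$ supported on the diagonal such that $\Theta-\Delta$ remains $q$-positive, using the resolvent identity to propagate complete positivity from $L_\Theta$ down to every $t\geq 0$; this contradicts hyper maximality of $\Theta$. That forces $VV^*=\lambda I_n$ and $V^*V=\lambda I_{n'}$ for some $\lambda>0$, whence $n=n'$ and $V=\sqrt{\lambda}\,U$ for a unitary $U$. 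Reading off equality in the Schwarz bound then identifies $\tau$ and extracts $\rho'(A)=\rho(UAU^*)$. The delicate technical step is the perturbation argument itself: controlling how a CP-valid diagonal perturbation interacts with the resolvent of $\Theta$ to remain $q$-positive will require careful use of Lemma \ref{Lphi} and the idempotence of $L_\Theta$.
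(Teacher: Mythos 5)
Your plan misidentifies where the real work is, and the step you flag as ``delicate'' is both unfinished and unnecessary in the $q$-pure case. Since $\rho$ and $\rho'$ are faithful, their eigenvalue lists have full length: $k=n$ and $k'=n'$. Once cocycle conjugacy gives \emph{any} norm-one corner from $\phi$ to $\psi$ --- this is Lemma \ref{normone}, no hyper maximality needed --- the argument of Proposition \ref{kk'} yields $k=k'$ and identical eigenvalue lists, so $n=n'$ at once, and two states on $M_n(\C)$ with the same eigenvalue list are unitarily equivalent. That is the entire forward direction. This is the route the paper takes; hyper maximality and the explicit $q$-subordinate constructions (Theorems \ref{biggie}--\ref{bigone}) are only needed for the non-$q$-pure generalization in Theorem \ref{theone}, where $k<n$ is allowed and the compression block $E$ enters.

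By contrast, the perturbation argument you sketch as the ``main obstacle'' has a genuine gap. You need a $q$-subordinate $\Theta'$ of $\Theta$ with the same off-diagonal $\gamma$, i.e.\ both $\Theta'\geq_q 0$ \emph{and} $\Theta\geq_q\Theta'$. The latter is a one-parameter family of CP inequalities on the resolvents, not complete positivity of $\Theta-\Theta'$, and it does not propagate from the single limit map $L_\Theta$ by any standard resolvent identity; the paper has to construct such subordinates by hand (Theorem \ref{bigone}). Your Kadison--Schwarz analysis is actually closer to a direct route than you realize: for $B$ in the range of the idempotent $\sigma$, the two Schwarz bounds plus faithfulness of $\rho,\rho'$ force $BB^*=\|B\|^2 I_n$ and $B^*B=\|B\|^2 I_{n'}$ outright (no contradiction argument needed), which already gives $n=n'$ and $B$ proportional to a unitary. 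What remains is that $\range(\sigma)$ is one-dimensional, which does not follow from Schwarz alone; the paper's Lemma \ref{corner} handles this by noting that any $\geq 2$-dimensional subspace of $M_n(\C)$ contains a nonzero singular matrix (a determinant-degree argument) and deducing a contradiction from complete positivity of $\Theta$ applied to the range projection, using faithfulness of $\rho$.
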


\subsection{Conjugacy for $q$-positive maps}

We will only be concerned with the identity of a $q$-positive map up to
an equivalence relation we will call conjugacy.  More specifically,
if $\phi: M_n(\C) \rightarrow M_n(\C)$ is a unital $q$-positive map
and $U \in M_n(\C)$ is any unitary matrix, the map $\phi_U(A) := U^*
\phi(UAU^*)U$ is also unital and $q$-positive.   We have the following definition from \cite{m2c}:

\begin{defn}
Let $\phi, \psi: M_n(\C) \rightarrow M_n(\C)$ be $q$-positive maps.  We say
$\phi$ is \emph{conjugate} to $\psi$ if $\psi=\phi_U$ for some unitary $U \in M_n(\C)$.
\end{defn}

Conjugacy is clearly an equivalence relation, and its definition is
analogous to that of conjugacy for $E_0$-semigroups. Indeed, since
every $*$-isomorphism of $M_n(\C)$ is implemented by unitary
conjugation, two $q$-positive maps $\phi, \psi: M_n(\C) \rightarrow
M_n(\C)$ are conjugate if and only if $\psi= \theta \circ \phi \circ \theta^{-1}$
for some $*$-isomorphism
$\theta$ of $M_n(\C)$.
If $\nu$ is a type II Powers weight
of the form $\nu(\sqrt{I - \Lambda(1)}B\sqrt{I -
\Lambda(1)})=(f,Bf)$, then conjugacy between unital $q$-positive maps
$\phi$ and $\psi$ is always a sufficient condition for
$(\phi, \nu)$ and $(\psi, \nu)$ to induce cocycle conjugate $E_0$-semigroups.
Indeed, it is
straightforward to verify that if $\phi: M_n(\C) \rightarrow M_n(\C)$
is unital and $q$-positive, then the map $\gamma: M_n(\C) \rightarrow
M_n(\C)$ defined by $\gamma(A)=\phi(AU^*)U$ is a hyper maximal
$q$-corner from $\phi$ to $\phi_U$ (for details, see the discussion preceding
Proposition 2.11 of
\cite{m2c}), whereby Proposition \ref{hypqc} gives us:

\begin{prop}\label{arrgh} Let $\phi: M_n(\C) \rightarrow M_n(\C)$
be unital and $q$-positive, and suppose $\psi$
is conjugate to $\phi$.  If $\nu$ is a type II Powers weight
of the form $$\nu(\sqrt{I - \Lambda(1)}B\sqrt{I -
\Lambda(1)})=(f,Bf),$$ then $(\phi, \nu)$ and $(\psi, \nu)$ induce
cocycle conjugate $E_0$-semigroups.
\end{prop}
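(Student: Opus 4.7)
The plan is to reduce the claim to Proposition \ref{hypqc} by exhibiting an explicit hyper maximal $q$-corner from $\phi$ to $\psi$. Since $\psi$ is conjugate to $\phi$, fix a unitary $U \in M_n(\C)$ with $\psi(A) = U^*\phi(UAU^*)U$. Following the construction sketched just before the statement (and detailed prior to Proposition 2.11 of \cite{m2c}), I would take $\gamma: M_n(\C) \to M_n(\C)$ given by $\gamma(A) = \phi(AU^*)U$, so the associated block map $\Theta$ takes the form
\begin{displaymath}
\Theta\left(\begin{array}{cc} A_{11} & A_{12} \\ A_{21} & A_{22} \end{array}\right) = \left(\begin{array}{cc} \phi(A_{11}) & \phi(A_{12}U^*)U \\ U^*\phi(UA_{21}) & U^*\phi(UA_{22}U^*)U \end{array}\right).
\end{displaymath}

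To show $\Theta \geq_q 0$, so that $\gamma$ is indeed a $q$-corner, I would set $V = \mathrm{diag}(I_n, U) \in M_{2n}(\C)$ and let $\Phi: M_{2n}(\C) \to M_{2n}(\C)$ denote the componentwise application of $\phi$ to $2 \times 2$ block matrices over $M_n(\C)$. A short block computation gives $\Theta(A) = V^*\Phi(VAV^*)V$, i.e.\ $\Theta = \Phi_V$. The amplification $\Phi$ is $q$-positive because $\Phi(I+t\Phi)^{-1}$ is just $\phi(I+t\phi)^{-1}$ applied componentwise and hence completely positive for every $t \geq 0$; conjugation by the unitary $V$ then preserves $q$-positivity by the very definition of $\Phi_V$. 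Thus $\Theta \geq_q 0$.

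The main obstacle will be hyper maximality: given any $\Theta \geq_q \Theta' \geq_q 0$ whose off-diagonal entries are $\gamma$ and $\gamma^*$, I must force $\Theta' = \Theta$. My approach would be to apply Lemma \ref{Lphi} to obtain the idempotent completely positive maps $L_\Theta$ and $L_{\Theta'}$, use the block structure $\Theta = \Phi_V$ to describe $L_\Theta$ explicitly in terms of $L_\phi$ and $U$, and exploit the range and nullspace identities of Lemma \ref{Lphi} together with the positivity of $\Theta - \Theta'$ to show that coincidence of the off-diagonals leaves no slack in the diagonal corners. This is the verification promised in the discussion preceding Proposition 2.11 of \cite{m2c}. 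Once hyper maximality is established, Proposition \ref{hypqc} applied to $\phi$, $\psi$, and the type II Powers weight $\nu$ of the assumed form $\nu(\sqrt{I - \Lambda(1)}B\sqrt{I - \Lambda(1)}) = (f,Bf)$ immediately yields cocycle conjugacy of the induced $E_0$-semigroups.
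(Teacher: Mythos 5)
Your argument follows the paper's route exactly: use the same map $\gamma(A)=\phi(AU^*)U$, assert that it is a hyper maximal $q$-corner from $\phi$ to $\phi_U$ (deferring the verification to the discussion preceding Proposition~2.11 of \cite{m2c}), and conclude via Proposition~\ref{hypqc}. You add a correct explicit check that $\Theta\geq_q 0$ via the identity $\Theta=\Phi_V$ for the amplification $\Phi$ and the block unitary $V$ --- a useful piece of detail the paper itself leaves to \cite{m2c} --- but otherwise, including the appeal to \cite{m2c} for the hyper maximality verification, the two proofs coincide.
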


In the case that $\phi$ and $\psi$ are unital rank one $q$-pure maps
and
$\nu$ is a type II Powers weight
of the form $\nu(\sqrt{I - \Lambda(1)}B\sqrt{I -
\Lambda(1)})=(f,Bf)$,
Theorem \ref{statesbig} states that conjugacy between $\phi$ and $\psi$ is
both necessary and sufficient
for $(\phi, \nu)$ and $(\psi, \nu)$ induce
cocycle conjugate $E_0$-semigroups.

Let
$\phi: M_n(\C) \rightarrow M_n(\C)$ be a unital linear map of rank one. 
It is not difficult to see that $\phi$ is $q$-positive if and only if it
has the form $\phi(A)=\rho(A)I$ for some state $\rho \in M_n(\C)^*$. It is
well-known that we can write $\rho$ in the form \begin{equation}
\label{rhoform} \rho(A)= \sum_{i=1}^{k \leq n} \lambda_i (g_i, A
g_i),\end{equation} for some mutually orthogonal unit vectors
$\{g_i\}_{i=1}^{k} \subset \C^n$ and some positive numbers
$\lambda_1 \geq \cdots \geq \lambda_k > 0$ such that $\sum_{i=1}^{k
} \lambda_i = 1$. With the conditions of the previous sentence
satisfied, the number $k$ and the monotonically decreasing set
$\{\lambda_i\}_{i=1}^k$ are unique.

\begin{defn} \label{eigvallist}  Assume the notation of the previous paragraph.  We call
$\{\lambda_i\}_{i=1}^k$ the \emph{eigenvalue
list} for $\rho$.
\end{defn}

We should note that our definition differs from a previous definition of eigenvalue list 
in the literature (see, for example, \cite{morearv})
in that our eigenvalue lists do not include zeros.  By our definition,
is possible for states $\rho$ and $\rho'$ acting on $M_n(\C)$ and $M_{n'}(\C)$
to have identical eigenvalue lists if $n \neq n'$.

Let $\{e_i\}_{i=1}^n$ be the standard basis for $\C^n$.
If $\rho$ has the form \eqref{rhoform} and $U \in M_n(\C)$ is any unitary matrix
such that $Ue_i= g_i$ for all $i=1, \ldots, k$, then 
$$\rho(UAU^*)=\sum_{i=1}^k \lambda_i (g_i, UAU^*g_i)=
\sum_{i=1}^k \lambda_i (U^* g_i, AU^*g_i) = \sum_{i=1}^k \lambda_i
(e_i, A e_i)$$ and
\begin{equation}\label{di} \phi_U(A) = U^* \phi(UAU^*)U = U^*\Big[ \Big( \sum_{i=1}^k \lambda_i (e_i, A e_i) \Big)
 I \Big] U
= \Big(\sum_{i=1}^k \lambda_i a_{ii} \Big) I\end{equation} for all $A \in M_n(\C)$.
We will use this fact repeatedly.

\section{Our results}
We begin with the following observation:
\begin{lem} \label{normone} Let $\phi: M_n(\C) \rightarrow M_n(\C)$ and $\psi: M_{n'}(\C)
\rightarrow M_{n'}(\C)$ be unital $q$-positive maps, and let $\nu$
and $\eta$ be type II Powers weights.  If the boundary weight
doubles $(\phi, \nu)$ and $(\psi, \eta)$ induce cocycle conjugate
$E_0$-semigroups, there is a corner $\gamma$ from $L_\phi$ to
$L_\psi$ such that $||\gamma||=1$ and $1$ is an eigenvalue of $\gamma$.
\end{lem}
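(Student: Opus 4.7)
The plan is to exploit the cocycle conjugacy of $\alpha^d$ and $\beta^d$ to manufacture a flow corner between the $CP$-flows associated to $(\phi,\nu)$ and $(\psi,\eta)$, and then pass to a limit to obtain a corner between the idempotents $L_\phi$ and $L_\psi$. By the definition of cocycle conjugacy, the hypothesis yields an $E_0$-semigroup $\Theta$ on $B(H_1 \oplus H_2)$ whose diagonal blocks are (conjugate to) $\alpha^d$ and $\beta^d$ and whose off-diagonal blocks are implemented by an intertwining unitary cocycle $W_t$. Restricting to the natural $CP$-flow subspaces gives a $CP$-flow $\widetilde\Theta$ on $(\C^n \oplus \C^{n'})\otimes L^2(0,\infty)$ whose off-diagonal entries form a flow corner $\gamma_t$ from the $CP$-flow of $(\phi,\nu)$ to that of $(\psi,\eta)$, in the sense of Powers (\cite{hugepaper}, \cite{bigpaper}).

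Next I would push this flow corner down to the matrix-algebra level using the construction of Proposition \ref{bdryweight}. The boundary weight map of $\widetilde\Theta$ has a natural block structure: the diagonal pieces are the boundary weight maps of $(\phi,\nu)$ and $(\psi,\eta)$, while the off-diagonal piece is a ``boundary weight corner'' built from $\gamma_t$. Inverting via $\hat{\pi}_t = \omega_t(I+\hat{\Lambda}\omega_t)^{-1}$ as in Theorem \ref{powersthm} and evaluating in the spirit of $\Omega_\nu$ should produce, for each $t>0$, a linear map $\widetilde\gamma_t : M_{n,n'}(\C) \to M_{n,n'}(\C)$ such that the $2\times 2$ block map with diagonal $t\phi(I+t\phi)^{-1},\ t\psi(I+t\psi)^{-1}$ and off-diagonal $\widetilde\gamma_t,\widetilde\gamma_t^*$ is completely positive.

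I would then take $t\to\infty$. By Lemma \ref{Lphi} the diagonal entries converge in norm to $L_\phi$ and $L_\psi$. Complete positivity of the block matrix together with unitality of the diagonal forces $\|\widetilde\gamma_t\|\leq 1$, so by compactness in the finite-dimensional space of linear maps $M_{n,n'}(\C)\to M_{n,n'}(\C)$, a subsequence $\widetilde\gamma_{t_k}$ converges to some $\gamma$; the limiting block map is still completely positive, so $\gamma$ is a corner from $L_\phi$ to $L_\psi$. For the norm-one and eigenvalue-one conclusions, I would track the identity cocycle $W_0 = I$ through the construction: this supplies a canonical nonzero element fixed by $\widetilde\gamma_t$ (up to a scalar depending on $t$), and a normalization-plus-compactness argument in the $t\to\infty$ limit should yield a nonzero $X \in M_{n,n'}(\C)$ with $\gamma(X) = X$, from which both $\|\gamma\| = 1$ and the claim that $1$ is an eigenvalue of $\gamma$ follow.

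The main obstacle is the second step: making precise the descent from a flow corner to a corner between the matrix-level maps $t\phi(I+t\phi)^{-1}$ and $t\psi(I+t\psi)^{-1}$, because the two boundary weight doubles use different Powers weights $\nu\neq\eta$, so the usual identification via a single $\Omega_\nu$ does not apply to the off-diagonal piece directly and must be replaced by a hybrid construction using both $\Omega_\nu$ and $\Omega_\eta$. A secondary subtlety is the eigenvalue-one assertion, since a norm-one contraction on a finite-dimensional space need not have $1$ as an eigenvalue; a concrete invariant element drawn from the unitarity of the intertwining cocycle (rather than compactness alone) will be needed to secure this.
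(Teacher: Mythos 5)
Your overall blueprint — cocycle conjugacy gives a flow corner, the flow corner descends to a one-parameter family of matrix-level corners, and a limit produces a corner between the idempotents — is indeed the shape of the argument the paper relies on, but the paper does not re-derive it. The paper's proof simply cites Lemma~5.3 of \cite{Me}, where exactly this was done (under the extra, and here unused, hypotheses that $\phi$ and $\psi$ are rank one and $q$-pure), and then observes that the limits appearing there coincide with $L_\phi$ and $L_\psi$ by Lemma~\ref{Lphi}. So where you attempt a from-scratch reconstruction, the paper's contribution is only the last, easy observation.

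Within your reconstruction there is a concrete parameter error worth correcting. The maps that the flow corner descends to are \emph{not} of the form $t\phi(I+t\phi)^{-1}$ and $t\psi(I+t\psi)^{-1}$ for a common parameter $t$. The truncation $\nu_t$ enters through the scalar $\nu_t(\Lambda(1))$, and similarly for $\eta_t(\Lambda(1))$; the diagonal blocks of the intermediate completely positive map are
$\nu_t(\Lambda(1))\,\phi\big(I+\nu_t(\Lambda(1))\phi\big)^{-1}$ and $\eta_t(\Lambda(1))\,\psi\big(I+\eta_t(\Lambda(1))\psi\big)^{-1}$,
with \emph{different} coefficients because $\nu\neq\eta$. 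Moreover, the relevant limit is $t\to 0^+$, not $t\to\infty$: that is precisely where both $\nu_t(\Lambda(1))$ and $\eta_t(\Lambda(1))$ diverge (this is where unboundedness — the ``type II'' hypothesis — is used), and then Lemma~\ref{Lphi} identifies the limits as $L_\phi$ and $L_\psi$. Your ``hybrid construction using both $\Omega_\nu$ and $\Omega_\eta$'' concern is real, but it is resolved precisely by allowing the two diagonal coefficients to differ; there is no need for a common parameter. Finally, as you yourself flag, a limit of norm-one contractions need not have norm one, let alone an eigenvalue $1$; the eigenvalue claim in Lemma~5.3 of \cite{Me} is extracted from a concrete fixed element coming from the unitary cocycle, and your compactness-only passage does not secure it. Since the published proof just quotes that lemma, you do not need to rebuild this step — but if you do, you must carry the fixed element through the limit rather than appeal to compactness.
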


\begin{proof}  This is a slight generalization of Lemma
5.3 of \cite{Me} (where $\phi$ and $\psi$ were assumed to have rank one
and be $q$-pure), but its proof is identical.
Indeed, the exact same argument as in the proof of Lemma 5.3 shows
that there is a corner $\gamma$ from $\lim_{t
\rightarrow 0^+} \nu_t(\Lambda(1)) \phi(I + \nu_t(\Lambda(1))
\phi)^{-1}$ to $\lim_{t \rightarrow 0^+} \eta_t(\Lambda(1)) \psi(I +
\eta_t(\Lambda(1)) \psi)^{-1}$ (provided the limits exist) such that $||\gamma||=1$
and 1 is an eigenvalue of $\gamma$.  
We observe that the former limit is $L_\phi$ and
the latter limit is $L_\psi$.  Indeed, the values $\{\nu_t(\Lambda(1))\}_{t >0}$
and $\{\eta_t(\Lambda(1))\}_{t >0}$ are monotonically decreasing in $t$, and 
since $\nu$ and $\eta$ are unbounded, we have $$\lim_{t \rightarrow 0+} \nu_t(\Lambda(1))= \lim_{t
\rightarrow 0+} \eta_t(\Lambda(1)) = \infty.$$ \end{proof}

We have the following lemma (see Lemma 3.5 of \cite{Me}):
\begin{lem}\label{corners}
Let $\phi: M_n(\C) \rightarrow M_{r}(\C)$, $\psi: M_{n'}(\C)
\rightarrow M_{r'}(\C)$ be completely positive maps, so for some
$k, k' \in \mathbb{N}$ and sets of linearly independent matrices
$\{S_i\}_{i=1}^{k} \subset M_{r,n}(\C)$ and $\{T_i\}_{i=1}^{k'}
\subset M_{r',n'}(\C)$, we have
\begin{equation}
\label{cornereq} \phi(A) = \sum_{i=1}^k S_i A S_i^*, \ \ \psi(D) = \sum_{i=1}^{k'}
T_i A T_i^* \end{equation}
for all $A \in M_n(\C)$, $D \in M_{n'}(\C).$

A linear map $\gamma: M_{n,n'}(\C) \rightarrow M_{r,r'}(\C)$
is a corner from $\phi$ to $\psi$ if and only if, for some $C=(c_{ij}) \in M_{k,k'}(\C)$
with $||C|| \leq 1$,
we have
$$\gamma(B) = \sum_{i=1}^k \sum_{j=1}^{k'} c_{ij} S_i B T_j^*$$
for all $B \in M_{n,n'}(\C)$.
\end{lem}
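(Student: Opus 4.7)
The plan is to prove both directions by comparing a Choi/Arveson decomposition of $\Theta$ against the prescribed minimal decompositions of $\phi$ and $\psi$. The underlying algebraic identity I would exploit is that $\|C\|\le 1$ is equivalent to the block matrix $\begin{pmatrix} I_k & C \\ C^* & I_{k'}\end{pmatrix}$ being positive semidefinite in $M_{k+k'}(\C)$.

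For the ``if'' direction, starting from a contraction $C = (c_{ij}) \in M_{k,k'}(\C)$, I would factor the positive matrix $\begin{pmatrix} I_k & C \\ C^* & I_{k'}\end{pmatrix} = \begin{pmatrix} V \\ W \end{pmatrix} \begin{pmatrix} V^* & W^*\end{pmatrix}$ for some $V \in M_{k,m}(\C)$ and $W \in M_{k',m}(\C)$ satisfying $VV^* = I_k$, $WW^* = I_{k'}$, and $VW^* = C$. Setting
$$R_\ell = \begin{pmatrix} \sum_{i=1}^k v_{i\ell} S_i & 0 \\ 0 & \sum_{j=1}^{k'} w_{j\ell} T_j \end{pmatrix} \in M_{r+r',n+n'}(\C),$$
a direct block computation shows $\sum_{\ell=1}^m R_\ell X R_\ell^* = \Theta(X)$: the identities $VV^*=I_k$ and $WW^*=I_{k'}$ reproduce $\phi$ and $\psi$ in the diagonal blocks, while $VW^*=C$ reproduces $\gamma(B) = \sum_{i,j} c_{ij} S_i B T_j^*$ in the $(1,2)$ block. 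Hence $\Theta$ is completely positive.

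For the ``only if'' direction, given $\Theta$ completely positive, I would apply the Arveson/Choi representation theorem to write $\Theta(X) = \sum_{\ell=1}^m R_\ell X R_\ell^*$ and express each $R_\ell$ in block form $R_\ell = \begin{pmatrix} A_\ell & B_\ell \\ C_\ell & D_\ell\end{pmatrix}$. Evaluating $\Theta$ on inputs with only the $(1,1)$ (resp.\ $(2,2)$) block nonzero and comparing against the output, whose off-diagonal blocks must vanish, forces $\sum_\ell C_\ell X_{11} C_\ell^* = 0$ and $\sum_\ell B_\ell X_{22} B_\ell^* = 0$; choosing $X_{11}=I_n$ and $X_{22}=I_{n'}$ gives $B_\ell = C_\ell = 0$ for every $\ell$. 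The $R_\ell$ are thus block-diagonal and one extracts the auxiliary Kraus decompositions $\phi(A) = \sum_\ell A_\ell A A_\ell^*$ and $\psi(D) = \sum_\ell D_\ell D D_\ell^*$. Invoking uniqueness of minimal Choi decompositions (enforced by linear independence of $\{S_i\}$ and $\{T_j\}$), there exist $V = (v_{i\ell}) \in M_{k,m}(\C)$ and $W = (w_{j\ell}) \in M_{k',m}(\C)$ with $VV^* = I_k$, $WW^* = I_{k'}$, $A_\ell = \sum_i v_{i\ell} S_i$, and $D_\ell = \sum_j w_{j\ell} T_j$. Reading off the $(1,2)$ block then gives $\gamma(B) = \sum_\ell A_\ell B D_\ell^* = \sum_{i,j}(VW^*)_{ij} S_i B T_j^*$, and the bound $\|VW^*\| \le \|V\|\,\|W\| = 1$ provides the required contraction $C := VW^*$.

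The main obstacle is the appeal to uniqueness of minimal Choi decompositions, which I would justify via the Choi matrix $\Phi = \sum_i \operatorname{vec}(S_i)\operatorname{vec}(S_i)^*$: since $\{S_i\}$ is linearly independent, $\{\operatorname{vec}(S_i)\}$ is a basis for $\range(\Phi)$, so any second factorization $\Phi = \sum_\ell \operatorname{vec}(A_\ell)\operatorname{vec}(A_\ell)^*$ forces $\operatorname{vec}(A_\ell) = \sum_i v_{i\ell}\operatorname{vec}(S_i)$, and substituting back and using the linear independence of $\{\operatorname{vec}(S_i)\operatorname{vec}(S_{i'})^*\}$ yields $VV^* = I_k$. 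Everything else reduces to block-matrix bookkeeping.
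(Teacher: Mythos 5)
The paper does not actually prove this lemma; it cites it as Lemma~3.5 of \cite{Me}, a prior paper by the same author, so there is no in-text proof to compare against. Evaluating your argument on its own merits: it is correct. Both directions are sound. In the ``if'' direction the factorization of $\bigl(\begin{smallmatrix} I_k & C \\ C^* & I_{k'} \end{smallmatrix}\bigr) \ge 0$ as a Gram matrix yields $V,W$ with $VV^*=I_k$, $WW^*=I_{k'}$, $VW^*=C$, and your block Kraus operators $R_\ell$ reproduce $\phi,\psi,\gamma$ exactly as claimed. In the ``only if'' direction, one small wording slip: evaluating $\Theta$ on $\bigl(\begin{smallmatrix} X_{11} & 0 \\ 0 & 0 \end{smallmatrix}\bigr)$ forces the \emph{$(2,2)$ block} (not the off-diagonal blocks) of the Kraus output to vanish, i.e.\ $\sum_\ell C_\ell X_{11} C_\ell^* = 0$; with $X_{11}=I_n$ this gives $C_\ell = 0$, and symmetrically $B_\ell = 0$. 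That is what you in fact use, so the conclusion stands. The appeal to uniqueness of minimal Kraus/Choi decompositions is the crux, and your justification via the Choi matrix $\Phi = \sum_i \operatorname{vec}(S_i)\operatorname{vec}(S_i)^*$ and linear independence of $\{\operatorname{vec}(S_i)\operatorname{vec}(S_{i'})^*\}$ is exactly the standard argument and is correctly deployed: any other decomposition $\{A_\ell\}$ has $\operatorname{vec}(A_\ell) \in \range(\Phi) = \spann\{\operatorname{vec}(S_i)\}$, giving $A_\ell = \sum_i v_{i\ell} S_i$ with $VV^* = I_k$. The bound $\|VW^*\| \le 1$ then closes the loop. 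This is in all likelihood the same route taken in the cited source, since it is the canonical one for corner lemmas of this type.
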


{\bf Remark 1:}  Suppose $\gamma$ is a $q$-corner from $\phi$ to $\psi$.  Let $U \in M_n(\C)$
and $V \in M_{n'}(\C)$ be arbitrary unitary matrices, and let
\begin{displaymath}
\vartheta = \left( \begin{array}{cc} \phi & \gamma \\ \gamma^* & \psi  \end{array} \right) \geq_q 0.
\end{displaymath}
For the unitary matrix
\begin{displaymath}
Z = \left( \begin{array}{cc} U & 0_{n,n'} \\ 0_{n',n} & V \end{array} \right) \in M_{n+n'}(\C),
\end{displaymath}
we have $\vartheta_Z \geq_q 0$ (since $\vartheta \geq_q 0$), where
\begin{displaymath}
\vartheta_Z
\left( \begin{array}{cc} A & B \\ C & D   \end{array} \right)
= \left( \begin{array}{cc} \phi_U(A) & U^* \gamma(UBV^*)V \\ V^* \gamma^*(VCU^*)U & \psi_V(D)  \end{array} \right).
\end{displaymath}
Therefore, $B \rightarrow U^* \gamma(UBV^*)V$ is a $q$-corner from $\phi_U$ to $\psi_V$.
By Proposition 4.5 of \cite{Me}, there is an isomorphism between the $q$-subordinates of $\vartheta$
and the $q$-subordinates of $\vartheta_Z$.  In particular, if $\Phi: M_{n+n'}(\C) \rightarrow M_{n+n'}(\C)$
is a linear map, then $\vartheta \geq_q \Phi \geq_q 0$ if and only if $\vartheta_Z \geq_q \Phi_Z \geq_q 0$. 
It follows that $\gamma$ is a hyper 
maximal $q$-corner from $\phi$ to $\psi$
if and only if $B \rightarrow U^*\gamma(UBV^*)V$ is a 
hyper maximal $q$-corner from $\phi_U$ to $\psi_V$.
The same argument just used also gives us a bijection between norm one corners from $\phi$ to $\psi$
and norm one corners from $\phi_U$ to $\psi_V$.

\begin{prop} \label{kk'}
Let $\phi: M_n(\C) \rightarrow M_n(\C)$
and $\psi: M_{n'}(\C) \rightarrow M_{n'}(\C)$ be unital rank one
$q$-positive maps,
so for some states $\ell \in M_n(\C)^*$ and $\ell' \in M_{n'}(\C)^*$
with eigenvalue lists $\{\lambda_i\}_{i=1}^k$
and $\{\mu_i\}_{i=1}^{k'}$, respectively,
we have
$$\phi(A) = \ell(A)I_n,  \ \ \psi(D)= \ell'(D) I_{n'}$$
for all $A \in M_n(\C)$ and $D \in M_{n'}(\C)$.  Let $\nu$ and $\eta$ 
be type II Powers weights. 

If the boundary
weight doubles $(\phi, \nu)$ and $(\psi, \eta)$ induce cocycle
conjugate $E_0$-semigroups $\alpha^d$ and $\beta^d$, then $k=k'$ and $\lambda_i = \mu_i$ for
all $i=1, \ldots, k$.
\end{prop}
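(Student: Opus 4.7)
The plan is to reduce to the case in which the implementing states are faithful and then exploit the multiplicative domain of the associated unital CP map. First, by Remark 1 I may assume, after conjugating $\phi$ and $\psi$ by suitable unitaries, that they are in the diagonal form of \eqref{di}, so $\phi(A)=\sum_{i\le k}\lambda_i a_{ii}I_n$ and $\psi(D)=\sum_{i'\le k'}\mu_{i'}d_{i'i'}I_{n'}$. Both maps are idempotent (since $\phi^2(A)=\phi(\ell(A)I_n)=\ell(A)I_n=\phi(A)$, and similarly for $\psi$), so a direct computation of $\phi(I+t\phi)^{-1}=\tfrac{1}{1+t}\phi$ yields $L_\phi=\phi$ and $L_\psi=\psi$. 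Lemma \ref{normone} therefore produces a corner $\gamma:M_{n,n'}(\C)\to M_{n,n'}(\C)$ from $\phi$ to $\psi$ with $\|\gamma\|=1$ and $1$ as an eigenvalue. Applying Lemma \ref{corners} to the natural Kraus operators $\sqrt{\lambda_i}\,E^{(n)}_{ji}$ and $\sqrt{\mu_{i'}}\,E^{(n')}_{j'i'}$ writes $\gamma(B)=\sum c_{(i,j),(i',j')}\sqrt{\lambda_i\mu_{i'}}\,B_{ii'}\,E_{jj'}^{(n,n')}$ for a contraction $C$, from which one reads off that $\gamma(B)$ depends only on the top-left $k\times k'$ block of $B$.

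I then pass to the faithful-state reductions $\phi_k:M_k(\C)\to M_k(\C)$, $\phi_k(A)=\sum_{i\le k}\lambda_i a_{ii}I_k$, and $\psi_{k'}:M_{k'}(\C)\to M_{k'}(\C)$ defined analogously, and I let $T:M_{k,k'}(\C)\to M_{k,k'}(\C)$ send $B$ to the top-left $k\times k'$ block of $\gamma(B_{\mathrm{pad}})$. Taking the corresponding submatrix $C'$ of $C$ (which still has $\|C'\|\le 1$), Lemma \ref{corners} shows that $T$ is a corner from $\phi_k$ to $\psi_{k'}$. Any nonzero $\gamma$-eigenvector $E$ for eigenvalue $1$ has nonzero top-left block $B_0:=E|_{[k]\times[k']}$, because otherwise $\gamma(E)=0\ne E$; and then $T(B_0)=B_0$ together with $\|T\|\le\|\gamma\|=1$ forces $\|T\|=1$. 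Applying the Schwarz inequality for the unital CP map $\Theta_T=\bigl(\begin{smallmatrix}\phi_k & T \\ T^* & \psi_{k'}\end{smallmatrix}\bigr)$ at $Y=\bigl(\begin{smallmatrix}0 & B_0 \\ 0 & 0\end{smallmatrix}\bigr)$ gives $\phi_k(B_0B_0^*)\ge B_0B_0^*$ and $\psi_{k'}(B_0^*B_0)\ge B_0^*B_0$. Since $\phi_k(B_0B_0^*)=\ell_k(B_0B_0^*)I_k$, and $\ell_k$ is a \emph{faithful} convex combination of the diagonal entries of $B_0B_0^*$, the first inequality forces every $(B_0B_0^*)_{ii}$ to equal $\ell_k(B_0B_0^*)$; a trace-versus-eigenvalue comparison then collapses $B_0B_0^*$ to a scalar multiple of $I_k$. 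The symmetric argument gives $B_0^*B_0\propto I_{k'}$; matching ranks forces $k=k'$, and after rescaling I may assume $B_0\in U(k)$.

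Finally, the Schwarz equalities above are precisely the statement that $Y$ lies in the multiplicative domain of $\Theta_T$, so $\Theta_T(YZ)=\Theta_T(Y)\Theta_T(Z)$ and $\Theta_T(ZY)=\Theta_T(Z)\Theta_T(Y)$ for every $Z\in M_{k+k'}(\C)$. Taking $Z$ block-diagonal produces the two identities $T(B_0D)=\ell'_k(D)B_0$ and $T(AB_0)=\ell_k(A)B_0$ for all $A,D\in M_k(\C)$, where $\ell_k$ and $\ell'_k$ are the faithful states implementing $\phi_k$ and $\psi_{k'}$. Substituting $D=B_0^*A$ in the first and comparing with the second yields $\ell'_k(C)=\ell_k(B_0CB_0^*)$ for every $C$, i.e.\ $\ell'_k=\ell_k\circ\operatorname{Ad}(B_0)$. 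Because $B_0$ is unitary and the eigenvalue list of a state is invariant under unitary conjugation, the lists $\{\lambda_i\}_{i=1}^k$ and $\{\mu_i\}_{i=1}^{k'}$ must coincide, which is the conclusion. I expect the main obstacle to be the Schwarz squeeze of the second paragraph that forces $B_0$ to be unitary; it requires carefully combining the operator inequality $\ell_k(B_0B_0^*)I_k\ge B_0B_0^*$ with the faithfulness of $\ell_k$ and a trace comparison, after which the multiplicative-domain finish is clean.
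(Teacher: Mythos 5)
Your proof is correct, but it follows a genuinely different route from the paper's.  Both arguments start the same way: diagonalize via Remark~1, observe $L_\phi=\phi$, $L_\psi=\psi$, and invoke Lemma~\ref{normone} to produce a norm-one corner $\gamma$.  The paper then picks $X$ and $g$ attaining $\|\gamma\|=1$, compresses $\gamma$ to the scalar-valued functional $\tau(B)=(\gamma(X)g,\gamma(B)g)$, identifies $\tau$ as a norm-one corner from the state $\rho$ to $\rho'$, expresses it via a contraction matrix $C$ (Lemma~\ref{corners}), and pushes through an equality case of Cauchy--Schwarz in the trace inner product plus uniqueness of the polar decomposition to conclude $k=k'$ and $D_\mu=D_\lambda$.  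You instead compress $\gamma$ spatially to the upper-left $k\times k'$ block, where the implementing states are \emph{faithful}, and show this restriction $T$ is a corner between the faithful reductions $\phi_k$ and $\psi_{k'}$ (this step is correct: $\gamma$ genuinely depends only on that block because of the form of the Kraus operators $\sqrt{\lambda_i}E_{ji}$, and the corresponding submatrix $C'$ of $C$ is still a contraction).  You then run the Kadison--Schwarz inequality for the unital CP map $\Theta_T$ at the fixed point $B_0$; faithfulness of $\ell_k$ turns $\ell_k(B_0B_0^*)I_k\ge B_0B_0^*$ into $B_0B_0^*=cI_k$ (and symmetrically $B_0^*B_0=c'I_{k'}$), forcing $k=k'$ and $B_0$ unitary after rescaling.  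Finally the multiplicative-domain bimodule property gives $\ell'_k=\ell_k\circ\mathrm{Ad}(B_0)$, hence equal eigenvalue lists.  The two arguments prove the same inequality structure from different vantage points: the paper's is an elementary linear-algebra computation in the trace inner product; yours invokes two structural facts about unital CP maps (the Schwarz squeeze and the multiplicative domain), and has the conceptual payoff of directly exhibiting the unitary $B_0$ intertwining the two density matrices.  One small bookkeeping point worth making explicit in a writeup: the Schwarz \emph{inequality} alone already forces $B_0B_0^*$ scalar, after which $\Theta_T(Y^*Y)=\Theta_T(Y)^*\Theta_T(Y)$ actually holds with equality, so the multiplicative-domain step is not circular.
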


\begin{proof}  Our proof is similar to the proof of 
Theorem 5.4 of \cite{Me}.  Suppose $\alpha^d$ and $\beta^d$ are cocycle
conjugate.  For some unitaries $U \in M_n(\C)$ and $V \in M_{n'}(\C)$, we have
$$\phi_U(A)= \Big(\sum_{i=1}^k \lambda_i a_{ii}\Big)I_n
, \ \ \psi_V(D) =
\Big(\sum_{i=1}^{k'} \mu_i b_{ii}\Big)I_{n'}$$ for all $A \in M_n(\C)$ and $D \in M_{n'}(\C)$.
Let $\{e_i\}_{i=1}^n$ and $\{e_i'\}_{i=1}^{n'}$ be the standard bases
for $\C^n$ and $\C^{n'}$, respectively, and let $\rho \in M_n(\C)^*$ and 
$\rho' \in M_{n'}(\C)^*$ be the functionals \begin{equation} \label{jj} \rho(A)  
= \sum_{i=1}^k \lambda_i e_i ^* A e_i = \sum_{i=1}^k \lambda_i a_{ii}, \ \
\rho'(D) = \sum_{i=1}^{k'} \mu_i e_i'^*De_i'=\sum_{i=1}^{k'} \mu_i d_{ii},
\end{equation} 
so $\phi_U(A) = \rho(A)I_n$ and $\psi_V(D)=\rho'(D)I_{n'}$
for all $A \in M_n(\C)$ and $D \in M_{n'}(\C)$.
Note that $L_\phi = \phi$ and $L_\psi = \psi$, so 
by Lemma \ref{normone}, there is a norm one corner from $\phi$ to $\psi$.  Therefore, 
by
Remark 1, there is a norm one corner $\gamma$ from $\phi_U$
to $\psi_V$, so the map $\Theta: M_{n+n'}(\C) \rightarrow M_{n+n'}(\C)$
defined by
\begin{equation*} \Theta \left( \begin{array}{cc} A_{n,n} & B_{n,n'} \\ 
C_{n',n} & D_{n',n'} \end{array} \right)  = 
\left( \begin{array}{cc} \rho(A)I_n &  \gamma(B) \\ 
\gamma^*(C) & \rho'(D)I_{n'} \end{array} \right) 
\end{equation*}
is completely positive.  

Since $||\gamma||=1$, there is some $X \in M_{n , n'}(\C)$
with $||X||=1$ and some unit vector $g \in \C^{n'}$ such that
$||\gamma(X)g||^2= (\gamma(X)g, \gamma(X)g)=1$.  Let $\tau \in
M_{n,n'}(\C)^*$ be the functional defined by
$$\tau(B) = (\gamma(X)g, \gamma(B)g).$$
Letting 
\begin{equation*}
S= \left(\begin{array}{cc} \gamma(X)g & 0_{n,1} \\ 0_{n',1}
& g \end{array} \right) \in M_{n+n',2}(\C),
\end{equation*}
we observe that
\begin{displaymath}
\left( \begin{array}{cc} \rho(A) & \tau(B) \\ \tau^*(C) & \rho'(D)
\end{array} \right) = S^* \Theta \left( \begin{array}{cc} A & B \\ C & D
\end{array} \right) S  \ \ \textrm{for all} \
 \left( \begin{array}{cc} A & B \\ C &
D
\end{array} \right) \in M_{n+n'}(\C),
\end{displaymath}
hence $\tau$ is a corner from $\rho$ to $\rho'$.  Note that
$||\tau|| = \tau(X)=1$.

Let $D_\lambda \in M_k(\C)$ and $D_\mu \in M_{k'}(\C)$ be the
diagonal matrices whose $ii$ entries are $\sqrt{\lambda_i}$ and
$\sqrt{\mu_i}$, respectively.  
Since $\tau$ is a corner from $\rho$ to $\rho'$, equation \eqref{jj} and Lemma 
\ref{corners} imply that $\tau$
has the form $\tau(B)= \sum_{i,j} c_{ij} \sqrt{\lambda_i \mu_j}
(e_i, B e_j')$ for some $C=(c_{ij}) \in M_{k, k'}(\C)$ such that $||C|| \leq 1$. 
For each $B \in M_{n , n'}(\C)$,
let $\tilde{B} \in M_{k',k}(\C)$ be the top left $k' \times k$
minor of $B^T$, observing that
$$\tau(B)= \sum_{i=1}^k \sum_{j=1}^{k'} c_{ij} \sqrt{\lambda_i \mu_j} b_{ij}
= \tr(C D_\mu \tilde{B} D_\lambda)= \tr\Big(C D_\mu (D_\lambda
(\tilde{B})^*)^* \Big).$$ Let $M=\tilde{X} \in M_{k',k}(\C)$.  Applying the
Cauchy-Schwarz inequality to the inner product $\langle A, B
\rangle=\tr(BA^*)$ on $M_{k,k'} (\C)$, we see
\begin{eqnarray} 1& = & |\tau(X)|^2 = |\tr(C D_\mu (D_\lambda
M^*)^*) |^2 =|\langle D_\lambda M^*, C D_\mu \rangle|^2 \nonumber \\
& \leq & ||CD_\mu||^2_{\tr} ||D_\lambda
M^*||^2_{\tr}= \tr(D_\mu C^* C D_\mu)
\tr(D_\lambda
M^*M D_\lambda) \nonumber \\
\label{ii} & \leq & \tr(D_{\mu} I_{k'} D_{\mu})\tr(D_{\lambda}I_k D_\lambda)
= \Big(\sum_{i=1}^{k'} \mu_i\Big)\Big(\sum_{i=1}^k \lambda_i\Big) = 1 * 1 = 1.
\end{eqnarray}
Since equality holds in Cauchy-Schwarz, it follows that
for some $m \in \C$,
\begin{equation}\label{rk} m C D_\mu = D_\lambda M^*,\end{equation}
where $|m|=1$ since $||CD_\mu||_{\tr} = ||D_\lambda M^*||_{\tr}=1$. In
fact, $m=1$ since $\tau(X)=1$.

Since equality holds in \eqref{ii} and the trace map is faithful, we
have $C^*C= I_{k'}$ and $M^*M = I_{k}$.  Note that $$\min\{k,k'\} \geq
\rank(C)=k', \ \ \min\{k,k'\} \geq \rank(M)=k,$$ hence $k=k'$ and the
previous sentence shows that $C$ and $M$ are unitary.  Therefore,
from \eqref{rk} we have
$$D_\mu = C^* D_\lambda M^* = C^* M^* (M D_\lambda M^*),$$
whereby uniqueness of the right polar decomposition for the invertible positive matrix $D_\mu$
implies $D_\mu = M D_\lambda M^*$.  Since the eigenvalues of $D_\mu$ and $D_\lambda$
are listed in decreasing order, we have $D_\mu = D_\lambda$, hence
$\lambda_i = \mu_i$ for all $i=1, \ldots, k$.
\end{proof}

{\bf Remark 2:} If $\phi: M_k(\C) \rightarrow M_k(\C)$ is a unital rank one $q$-pure map, and if $\gamma$ is a nonzero $q$-corner from $\phi$
to $\phi$, then by Lemma \ref{Lphi}, $\sigma := \lim_{t \rightarrow \infty} t \gamma(I + t \gamma)^{-1}$
is a corner from $\phi$ to $\phi$ satisfying $\sigma^2=\sigma$.  We note that $||\sigma||=1$.
Indeed, since $\sigma^2=\sigma$ and $\range(\sigma) = \range(\gamma) \supsetneq \{0\}$, 
we have $||\sigma|| \geq 1$, while the fact that $\sigma$ is a corner between norm one completely
positive maps
implies $||\sigma|| \leq 1$, hence $||\sigma||=1$.  The following
lemma gives us the form of $\sigma$:
\begin{lem}\label{corner}
Let $\phi: M_k(\C) \rightarrow M_k(\C)$ be a unital $q$-positive
map of the form $\phi(A)=\rho(A)I$.  Assume $\rho$ is a faithful state
of the form
$$\rho(A) = \sum_{i=1}^k \mu_i a_{ii},$$
where $\mu_1, \ldots, \mu_k$ are positive numbers and
$\sum_{i=1}^k \mu_i = 1$.  Let $D_\mu$ be the diagonal matrix with
$ii$ entries $\sqrt{\mu_i}$ for $i=1,\ldots,k$, so $\Omega:= (D_\mu)^2$
is the trace density matrix for $\rho$.

Let $\sigma: M_k(\C) \rightarrow M_k(\C)$ be
a nonzero linear map such that $\sigma^2=\sigma$.
Then $\sigma$ is a corner from $\phi$ to $\phi$ if, and only if,
some unitary $X \in M_k(\C)$ that commutes with $\Omega$, we have
$$\sigma(B)=\tr(X^* B \Omega)X$$ for all $B \in M_k(\C)$.
\end{lem}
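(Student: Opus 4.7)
The plan is to prove both directions separately. The ``if'' direction is a direct verification: assuming $\sigma(B) = \tr(X^*B\Omega)X$ with $X$ unitary and $X\Omega = \Omega X$, the identity $\sigma(\sigma(B)) = \tr(X^*B\Omega)\tr(X^*X\Omega)X = \sigma(B)$ follows from $X^*X = I$ and $\tr(\Omega) = 1$. To show $\sigma$ is a corner, I will apply Lemma \ref{corners} to the Kraus decomposition $\phi(A) = \sum_{i,j}(\sqrt{\mu_i}e_je_i^*)A(\sqrt{\mu_i}e_ie_j^*)$ and match coefficients to exhibit the explicit $k^2 \times k^2$ matrix $C$ with entries $c_{(ij)(i'j')} = x_{jj'}\bar{x}_{ii'}\sqrt{\mu_{i'}/\mu_i}$. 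The commutation $X\Omega = \Omega X$ forces $x_{ii'} = 0$ unless $\mu_i = \mu_{i'}$, so the scaling factor $\sqrt{\mu_{i'}/\mu_i}$ disappears wherever $c_{(ij)(i'j')}$ is nonzero, leaving the clean tensor form $C = \bar{X} \otimes X$ of norm $\|X\|^2 = 1$.

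For the ``only if'' direction, the central observation is that $\phi^2 = \phi$ (since $\phi(I) = I$ and the range of $\phi$ is $\C I$), so combined with $\sigma^2 = \sigma$ the block map $\Theta$ is a UCP idempotent. By the Choi--Effros theorem, $\mathcal{R} := \Theta(M_{2k}(\C))$ becomes a $C^*$-algebra under the product $a \cdot_\Theta b := \Theta(ab)$. A direct computation gives $p_i \mathcal{R} p_i = \C p_i$ for the diagonal projections $p_1 = \bigl(\begin{smallmatrix}I & 0 \\ 0 & 0\end{smallmatrix}\bigr)$ and $p_2 = \bigl(\begin{smallmatrix}0 & 0 \\ 0 & I\end{smallmatrix}\bigr)$, so $p_1$ and $p_2$ are minimal projections in $\mathcal{R}$ with $p_1 + p_2 = I_\mathcal{R}$. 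The classification of finite-dimensional $C^*$-algebras containing such a pair leaves only $\mathcal{R} \cong \C \oplus \C$ (which would force $\sigma = 0$, contrary to hypothesis) or $\mathcal{R} \cong M_2(\C)$. Hence the latter, and since $p_1 \mathcal{R} p_2$ is one-dimensional, $\sigma$ has rank one as a linear map: $\sigma(B) = f(B)Y$ for some nonzero $Y \in M_k(\C)$ and $f \in M_k(\C)^*$.

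Next I will pin down $Y$ and $f$. After rescaling $Y$ so that $\rho(YY^*) = 1$, the $M_2$-relations in $\mathcal{R}$ (from $\Theta$-products of $\bigl(\begin{smallmatrix}0 & Y \\ 0 & 0\end{smallmatrix}\bigr)$ with its adjoint) yield $\rho(Y^*Y) = 1$, while $\sigma^2 = \sigma$ gives $f(Y) = 1$, hence $\sigma(Y) = Y$. The Kadison--Schwarz inequality $\Theta(T^*T) \geq \Theta(T)^*\Theta(T)$ applied to $T = \bigl(\begin{smallmatrix}0 & Y \\ 0 & 0\end{smallmatrix}\bigr)$ yields $I \geq Y^*Y$; combined with $\rho(Y^*Y) = 1$ and the faithfulness of $\rho$, this forces $Y^*Y = I$, so $Y$ is unitary. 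For arbitrary $B$ the same inequality gives $|f(B)|^2 \leq \rho(B^*B) = \|B\|_\rho^2$, where $\|\cdot\|_\rho$ is the GNS norm. Riesz representation on the Hilbert space $(M_k(\C), \langle A, B\rangle_\rho := \rho(A^*B))$ produces a unique $Z \in M_k(\C)$ with $f(B) = \rho(Z^*B)$ and $\|Z\|_\rho \leq 1$; the equality case of Cauchy--Schwarz in $1 = f(Y) = \langle Z, Y\rangle_\rho$ (with $\|Y\|_\rho = \|Z\|_\rho = 1$) pins down $Z = Y$, giving $f(B) = \tr(Y^*B\Omega)$.

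The final step is to show $Y\Omega = \Omega Y$. Returning to Lemma \ref{corners} with $\sigma(B) = \tr(Y^*B\Omega)Y$, the associated $C$-matrix has entries $c_{(ij)(i'j')} = y_{jj'}\bar{y}_{ii'}\sqrt{\mu_{i'}/\mu_i}$ and factors as the tensor product $M \otimes Y$ with $M := \Omega^{-1/2}\bar{Y}\Omega^{1/2}$; the corner condition $\|C\| \leq 1$ therefore reduces to $\|M\| \leq 1$. On the other hand $|\det M| = |\det \bar{Y}| = 1$ since $Y$ is unitary, so the product of the singular values of $M$ equals $1$; together with $\|M\| \leq 1$ this forces every singular value of $M$ to equal $1$, so $M$ itself is unitary. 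Unpacking $M^*M = I$ gives $Y\Omega Y^* = \Omega$, which combined with $Y$ unitary is exactly $Y\Omega = \Omega Y$. I expect the main obstacle to be the Choi--Effros step that yields rank one of $\sigma$; once that is in hand, the rest of the argument is a routine assembly of Kadison--Schwarz, Riesz representation, and the singular-value/determinant argument.
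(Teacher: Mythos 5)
Your argument is correct but routed quite differently from the paper's. To see that $\sigma$ has rank one, the paper runs a direct positivity contradiction: a non-invertible $A$ of norm one in $\range\sigma$ would force $\bigl(\begin{smallmatrix}aI & A \\ A^* & I\end{smallmatrix}\bigr)\geq 0$ with $a<1$, which is impossible; that single step gives rank one and simultaneously shows every nonzero element of $\range\sigma$ is invertible. You instead observe that the block map $\Theta$ is a UCP idempotent (using $\phi^2=\phi$) and invoke Choi--Effros, classifying the resulting finite-dimensional $C^*$-algebra $\mathcal{R}$ through the two minimal projections $p_1, p_2$ summing to the identity. That is heavier machinery, and the unitarity of $X$ and the exact form of the functional then have to be recovered separately via Kadison--Schwarz and a Riesz-representation argument in the GNS inner product, whereas the paper extracts both, together with the commutation with $\Omega$, from one Cauchy--Schwarz computation in the trace inner product followed by uniqueness of the polar decomposition of $D_\mu$. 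Your replacement for that last step --- write the $C$-matrix from Lemma~\ref{corners} as $M\otimes X$ with $M=\Omega^{-1/2}\bar X\Omega^{1/2}$, note $\|M\|\leq 1$ while $|\det M|=1$, hence all singular values equal $1$, so $M$ is unitary and $X\Omega=\Omega X$ --- is a pleasant self-contained alternative to the polar-decomposition argument. For the backward direction, the paper conjugates by $\mathrm{diag}(X^*,I)$ to reduce complete positivity to a scalar check, while you exhibit $C=\bar X\otimes X$ of norm one directly and note that the commutation $X\Omega=\Omega X$ kills the $\sqrt{\mu_{i'}/\mu_i}$ weights; both work. The net trade-off is that your proof leans on named structural theorems (Choi--Effros, Kadison--Schwarz) and buys a cleaner conceptual picture of $\range\Theta$, while the paper's stays elementary and hands-on.
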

\begin{proof}  For the forward direction, suppose
that $\sigma$ is a nonzero corner from $\phi$ to $\phi$ and $\sigma^2=\sigma$.
Note that $||\sigma||=1$ by Remark 2.
We first show that $\sigma$ has rank one. If $\rank(\sigma) \geq
2$, then there is a non-invertible element $A \in \range(\sigma)$.  Scaling $A$
if necessary, we may assume $||A||=1$.  Let $P$ be the orthogonal projection onto the range of
$A$, so $PA=A$ and $A^*=A^*P$.  Since $P \neq I$ and $\rho$ is
faithful, we have $\phi(P) = \rho(A) I = a I$ for some $a<1$. We
note that
\begin{displaymath} 
\left( \begin{array}{cc} P & 0 \\ 0 & I \end{array} \right) \left(
\begin{array}{cc} I & A \\ A^* & I \end{array} \right) \left(
\begin{array}{cc} P & 0 \\ 0 & I \end{array} \right) = \left(
\begin{array}{cc} P & PA \\ A^*P & I \end{array} \right) = \left(
\begin{array}{cc} P & A \\ A^* & I \end{array} \right) \geq 0,
\end{displaymath}
so by complete positivity of $\Theta$ and the fact that
$\sigma^2=\sigma$, we have
\begin{displaymath}
\left( \begin{array}{cc} \phi(P) & \sigma(A) \\ \sigma^*(A^*) &
\phi(I) \end{array} \right)
 = \left( \begin{array}{cc} a I & A \\ A^* & I \end{array} \right)
\geq 0,
\end{displaymath}
which is impossible since $a <1$ and $||A||=1$.  This shows that not
only does $\sigma$ have rank one, but that every non-zero element of its
range is invertible.  In other words, for some linear functional $\tau \in M_k(\C)^*$
and some invertible matrix $X \in M_{k}(\C)$ with $||X||=1$, we have
$\sigma(B)= \tau(B) X$ for all $B \in M_{k}(\C)$.  Since $\sigma$ fixes its range
and $||\sigma||=1$, we have $||\tau||=\tau(X)=1$.

Let $g \in \C^k$ be a unit vector such that $||Xg||=1$. We observe
that $\tau$ is merely the functional $\tau(B) = (\sigma(X)g,
\sigma(B)g)$ for all $B \in M_k(\C)$, and an argument analogous
to the one given in the proof of Proposition \ref{kk'} shows that
$\tau$ is a corner
$\rho$ to $\rho$. By Lemma \ref{corners}, there is some $C \in
M_k(\C)$ with $||C|| \leq 1$ such that
\begin{eqnarray*} \tau(B)  =  \sum_{i,j=1}^k c_{ij} \sqrt{\mu_i \mu_j}
(e_i, B e_j) = \tr(C D_\mu B^T D_\mu)
\end{eqnarray*}
for all $A \in M_k(\C)$.  By the above equation and the fact that $\tau(X)=1$, we may
use the exact same Cauchy-Schwarz argument as in the proof of Proposition \ref{kk'} to conclude
that $C$ and $X^T$ are unitary and that
$$D_\mu= C^* D_\mu (X^T)^* = C^*(X^T)^* (X^T D_\mu (X^T)^*).$$
Uniqueness of the polar decomposition for the invertible positive
matrix $D_\mu$ gives us $C^*(X^T)^*= I$ and $X^T D_\mu (X^T)^* =
D_\mu$, where the transpose of the last equality is $X^* D_\mu X =
D_\mu$.  Therefore, $C=(X^*)^T$ and $X$ commutes with $\Omega$, so
for all $B \in M_k(\C)$ we have
$$\tau(B)= \tr\Big((X^*)^T D_\mu B^T D_\mu\Big) = \tr(D_\mu B D_\mu X^*)=
\tr(X^* B \Omega)$$ and $\sigma(B)=\tau(B)X =
 \tr(X^* B \Omega)X$.

Now assume the hypotheses of the backward direction and define $\tau
\in M_k(\C)^*$ by $\tau(B) = \tr(X^* B \Omega)$, noting that
$\sigma^2=\sigma$ and $\sigma(B) = \tau(B)X$ for all $B \in
M_k(\C)$. Let $\eta, \eta': M_{2k}(\C) \rightarrow M_{2k}(\C)$ be
the maps
\begin{displaymath}
\eta \left( \begin{array}{cc} A & B \\ C & D
 \end{array} \right) = \left( \begin{array}{cc} \rho(A)I & \tau(B)X \\ \tau^*(C)X^*
& \rho(D)I \end{array} \right), \ \
\eta' \left( \begin{array}{cc} A & B \\ C & D
 \end{array} \right) = \left( \begin{array}{cc} \rho(A)I & \tau(B)I \\ \tau^*(C)I
& \rho(D)I \end{array} \right).
\end{displaymath}
Define $\Upsilon: M_{2k}(\C) \rightarrow M_{2k}(\C)$ by
\begin{displaymath}
\Upsilon \left( \begin{array}{cc} A & B \\ C & D  \end{array} \right)
= \left( \begin{array}{cc} X^* & 0 \\ 0 & I  \end{array} \right)
\left( \begin{array}{cc} A & B \\ C & D  \end{array} \right)
\left( \begin{array}{cc} X & 0 \\ 0 & I  \end{array} \right).
\end{displaymath}
Note that $\Upsilon$ and $\Upsilon^{-1}$ are completely positive,
$\Upsilon \circ \eta = \eta'$, and $\Upsilon^{-1} \circ \eta'=\eta$.  Therefore,
$\eta$ is completely positive if and only if $\eta'$ is completely positive.
Since a complex matrix $(m_{ij}) \in M_{r}(\C)$ ($r \in \mathbb{N}$) is positive if and only if
$(m_{ij} I_n) \in M_r(M_n(\C))=M_{rn}(\C)$ is positive for every $n \in \mathbb{N}$, it
follows that
$\eta'$ is completely positive if and only
if $\eta''$ below is completely positive:
\begin{displaymath}
\eta'' \left( \begin{array}{cc} A & B \\ C & D
 \end{array} \right) = \left( \begin{array}{cc} \rho(A) & \tau(B) \\ \tau^*(C)
& \rho(D) \end{array} \right).
\end{displaymath}
Thus, $\eta$ is completely positive if and only if $\eta''$ is. In
other words, $\sigma$ is a corner from $\phi$ to $\phi$ if and only
if $\tau$ is a corner from $\rho$ to $\rho$.  But for all $B \in
M_{k}(\C)$, we have
$$\tau(B) = \sum_{i,j=1}^k c_{ij} \sqrt{\mu_i \mu_j}(e_i, B e_j)$$
for the unitary matrix $C= (X^*)^T$, so $\tau$ is a corner from $\rho$ to $\rho$
by Lemma \ref{corners}.
\end{proof}

We will make use of the following standard result regarding completely positive maps,
providing a proof here for the sake of completeness:

\begin{lem}\label{FAF}
Let $\phi: M_n(\C) \rightarrow M_n(\C)$ be a completely positive map.
If $\phi(E)=0$ for a projection $E$, then $\phi(A)=\phi(FAF)$ for all $A \in M_n(\C)$,
where $F=I-E$.
\end{lem}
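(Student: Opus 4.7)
The plan is to invoke the Choi--Arveson representation (already recalled earlier in the paper) and reduce the statement to a simple computation on the Kraus operators. Since $\phi: M_n(\C) \to M_n(\C)$ is completely positive, write
$$\phi(A) = \sum_{i=1}^m S_i A S_i^*$$
for some operators $S_i \in M_n(\C)$.

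The first key step is to use the hypothesis $\phi(E)=0$. Substituting,
$$0 = \phi(E) = \sum_{i=1}^m S_i E S_i^* = \sum_{i=1}^m (S_i E)(S_i E)^*.$$
Each summand is a positive matrix, so a sum of positives being zero forces every summand to vanish. Hence $(S_i E)(S_i E)^* = 0$, which gives $S_i E = 0$ for every $i$.

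Writing $I = E + F$, I then have $S_i = S_i E + S_i F = S_i F$ for every $i$. Consequently $S_i^* = F S_i^*$ as well, and for any $A \in M_n(\C)$,
$$S_i A S_i^* = (S_i F) A (F S_i^*) = S_i (FAF) S_i^*.$$
Summing over $i$ yields $\phi(A) = \phi(FAF)$, which is the claim. No step here is a real obstacle; the only point worth flagging is the initial reduction that $\sum_i (S_iE)(S_iE)^* = 0$ implies each $S_iE=0$, which is the standard rigidity for sums of positive operators. The use of the Choi--Arveson form is slightly heavier machinery than needed (one could instead apply $2$-positivity to the nonnegative matrix $\bigl(\begin{smallmatrix} E & EA \\ A^*E & A^*A \end{smallmatrix}\bigr)$ and argue directly), but the Kraus-operator route gives the cleanest proof in the finite-dimensional setting considered here.
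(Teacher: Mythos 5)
Your proof is correct and follows essentially the same route as the paper: both invoke the Choi--Arveson/Kraus decomposition, deduce $S_iE=0$ for each $i$ from the vanishing of a sum of positive operators, and conclude $\phi(A)=\phi(FAF)$. The only cosmetic difference is that you show directly $S_i=S_iF$, whereas the paper expands $A = EAE+EAF+FAE+FAF$ and kills the three extra terms; these are interchangeable.
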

\begin{proof} We know from \cite{choi} and \cite{arveson} that $\phi$
can be written $\phi(A)= \sum_{i=1}^p S_i A S_i^*$ for some $p \leq n^2$ and 
$\{S_i\}_{i=1}^p \subset M_n(\C)$.  If $\phi(E)=0$ for a projection $E$, then
$$0= S_i E S_i^*=S_i E E S_i^* = (S_i E)(S_i E)^*$$ for all $i$, so $S_iE=ES_i^*=0$ for all $i$.
Therefore, $\phi(EAE)=\phi(EAF)=\phi(FAE)=0$ for every $A \in M_n(\C)$.  Letting $F=I-E$,
we observe that for every $A \in M_n(\C)$,
$$\phi(A)=\phi(EAE + EAF + FAE + FAF)= \phi(FAF).$$\end{proof}

Let $\phi: M_n(\C) \rightarrow M_n(\C)$ and $\psi: M_{n'}(\C)
\rightarrow M_{n'}(\C)$ be unital rank one $q$-positive maps.
We ask two very important questions:  Is there a $q$-corner from
$\phi$ to $\psi$ ?  If so, can we find all such $q$-corners, and,
even further, determine which $q$-corners are hyper maximal?   The
following two theorems give us a complete answer to both questions when
$\phi$ and $\psi$ are implemented by diagonal states.
This suffices, since for any unital rank one $q$-positive maps $\phi$ and $\psi$,
there are always unitaries $U \in M_n(\C)$ and $V \in M_{n'}(\C)$ such that
$\phi_U$ and $\psi_V$ are implemented by diagonal states, where
Remark 1 tells us exactly how to transform the $q$-corners and hyper maximal
$q$-corners from 
$\phi_U$ to $\psi_V$ into those from $\phi$ to $\psi$.

\begin{thm} \label{biggie}
Let $\{\mu_i\}_{i=1}^k$ and $\{r_i\}_{i=1}^{k'}$ be monotonically
decreasing sequences of strictly positive numbers such that
$\sum_{i=1}^k \mu_k = \sum_{i=1}^{k'} r_i =1$. Define unital
$q$-positive maps $\phi: M_n(\C) \rightarrow M_n(\C)$ and $\psi:
M_{n'}(\C) \rightarrow M_{n'}(\C)$ (where $n \geq k$ and $n' \geq k'$) by
\begin{equation}\label{forms} \phi(A) = 
\Big( \sum_{i=1}^k \mu_i a_{ii}\Big)I_n \ \ \textrm{ and }  \ \ \psi(D)
= \Big(\sum_{i=1}^{k'} r_i d_{ii}\Big)I_{n'}\end{equation} for all $A=(a_{ij}) \in
M_n(\C)$ and $D=(d_{ij}) \in M_{n'}(\C)$.  Let $\Omega \in M_k(\C)$
be the trace density matrix for the faithful state $\rho \in
M_k(\C)^*$ defined by $\rho(A)= \sum_{i=1}^k \mu_i a_{ii}$.

If there is a nonzero $q$-corner from $\phi$ to $\psi$, then $k=k'$
and $\mu_i = r_i$ for all $i=1, \ldots, k$.  In that case, a linear
map $\gamma: M_{n,n'}(\C) \rightarrow M_{n,n'}(\C)$ is a $q$-corner
from $\phi$ to $\psi$ if and only if: for some unitary $X \in
M_{k}(\C)$ that commutes with $\Omega$, some contraction $E \in
M_{n-k, n'-k}(\C)$, and some $\lambda \in \C$ with $|\lambda|^2 \leq
\re(\lambda)$, we have
\begin{displaymath}
\gamma \left(  \begin{array}{cc} B_{k,k} & W_{k, n'-k}
\\ Q_{n-k,k} & Y_{n-k, n'-k}     \end{array} \right)
= \lambda \ \tr(X^*  B_{k,k} \Omega)
\left(  \begin{array}{cc} X & 0_{k,n'-k} \\ 0_{n-k,k} & E
  \end{array} \right)
\end{displaymath}
for all
\begin{displaymath}
\left(  \begin{array}{cc} B_{k,k} & W_{k, n'-k}
\\ Q_{n-k,k} & Y_{n-k, n'-k}     \end{array} \right) \in M_{n,n'}(\C).
\end{displaymath}
\end{thm}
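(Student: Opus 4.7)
The plan is to analyze $\Theta = \begin{pmatrix} \phi & \gamma \\ \gamma^* & \psi \end{pmatrix} \geq_q 0$ via its idempotent limit. Since $\Theta$ is unital, Lemma \ref{Lphi} gives the cp idempotent limit $L_\Theta$; as $\phi$ and $\psi$ are themselves idempotent, $L_\Theta = \begin{pmatrix} \phi & \sigma \\ \sigma^* & \psi \end{pmatrix}$ where $\sigma$ is the idempotent limit of $\gamma$. Projecting $\range(L_\Theta) = \range(\Theta)$ to the off-diagonal block gives $\range(\sigma) = \range(\gamma) \neq \{0\}$; reading $\sigma^2 = \sigma$ off $L_\Theta^2 = L_\Theta$; and applying $\Theta \geq 0$ to $\begin{pmatrix} BB^* & B \\ B^* & I_{n'}\end{pmatrix}$ via Schur complement gives $\sigma(B)\sigma(B)^* \leq \phi(BB^*) \leq \|B\|^2 I_n$, so $\|\sigma\| \leq 1$ and therefore $\|\sigma\| = 1$ (nonzero idempotent). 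Feeding this norm-one corner $\sigma$ into the Cauchy--Schwarz argument at the heart of the proof of Proposition \ref{kk'} forces $k = k'$ and $\mu_i = r_i$.

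Assume now $k = k'$, $\mu_i = r_i$, and let $E_k$ denote the projection onto $\{e_1,\ldots,e_k\}$. Lemma \ref{FAF} applied to $\Theta$ via $\phi(I_n - E_k) = 0 = \psi(I_{n'} - E_k)$ gives $\sigma(B) = \sigma(E_k B E_k)$. Cutting down to top-left $k \times k$ blocks on input and output produces a unital cp idempotent $\tilde\Theta_L$ on $M_{2k}(\C)$ (idempotence follows from $L_\Theta(M) = L_\Theta((E_k \oplus E_k) M (E_k \oplus E_k))$) with off-diagonal $\tilde\sigma$, which is nonzero: otherwise the top-left of $\sigma(B)$ would always vanish, so $\sigma^2(B) = \sigma(E_k\sigma(B)E_k) = \sigma(0) = 0$ would force $\sigma = 0$. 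Lemma \ref{corner} applied to $\tilde\sigma$ gives $\tilde\sigma(\tilde B) = \tr(X^* \tilde B\Omega) X$ for a unitary $X \in M_k(\C)$ commuting with $\Omega$. Set $\tau_\sigma(B) := \tr(X^* B_{k,k} \Omega)$ and $F := \sigma(\iota(X))$, where $\iota$ is the top-left $k \times k$ embedding. Then $\sigma^2 = \sigma$ unwinds as $\sigma(B) = \sigma(E_k\sigma(B)E_k) = \tau_\sigma(B)\sigma(\iota(X)) = \tau_\sigma(B) F$, showing $\sigma$ is rank one with $F_{k,k} = \tilde\sigma(X) = X$. The norm $\|\tau_\sigma\| = 1$ is attained at $B = \iota(X)$ (where $\tau_\sigma(\iota(X)) = \tr(\Omega) = 1$); combined with $\|\sigma\| = 1 = \|\tau_\sigma\|\|F\|$ this gives $\|F\| = 1$. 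Schwarz at $B = F$ (using $\sigma(F) = F$) gives $FF^* \leq \rho(FF^*) I_n$, and the chain $1 = \|F\|^2 \leq \rho(FF^*) = \sum_{i \leq k} \mu_i(FF^*)_{ii} \leq \sum_{i \leq k}\mu_i\|F\|^2 = 1$ forces $(FF^*)_{ii} = 1 = \|FF^*\|$ for every $i \leq k$, so $FF^* = I_k \oplus P$. The identity $F_{11} F_{11}^* + F_{12} F_{12}^* = I_k$ with $F_{11} F_{11}^* = XX^* = I_k$ yields $F_{12} = 0$; symmetrically $F_{21} = 0$. So $F = \begin{pmatrix} X & 0 \\ 0 & E \end{pmatrix}$ with $\|E\| \leq 1$.

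Since $\range(\gamma) = \range(\sigma) = \C\cdot F$, write $\gamma(B) = \tau_\gamma(B) F$, put $\lambda := \tau_\gamma(F)$, so $\gamma^2 = \lambda \gamma$; the ansatz $(I + t\gamma)^{-1}(B) = B - \frac{t\tau_\gamma(B)}{1 + t\lambda} F$ gives $\gamma(I+t\gamma)^{-1} = \gamma/(1+t\lambda)$, and letting $t \to \infty$ in $t\gamma(I + t\gamma)^{-1}$ matches $\sigma = \tau_\sigma(\cdot) F$, identifying $\tau_\gamma = \lambda\tau_\sigma$ and giving $\gamma(B) = \lambda \tr(X^* B_{k,k}\Omega) F$. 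For the scalar constraint, $\Theta(I + t\Theta)^{-1}(M) = \begin{pmatrix} \phi(A)/(1+t) & \gamma(B)/(1 + t\lambda) \\ * & \psi(D)/(1 + t) \end{pmatrix}$; scaling by $1 + t$ reduces cp-ness to the statement that $w_t \tau_\sigma(\cdot) F$ is a corner from $\phi$ to $\psi$ for $w_t := (1+t)\lambda/(1+t\lambda)$ and every $t \geq 0$. By Lemma \ref{corners} the associated contraction matrix factors (via $X\Omega = \Omega X$, so that $D_\mu^{-1/2} \bar X D_\mu^{1/2} = \bar X$) as $w_t \cdot F \otimes \bar X$, a product of norm $|w_t|$; the corner bound $|w_t| \leq 1$ for all $t \geq 0$ expands to $(|\lambda|^2 - 1) + 2t(|\lambda|^2 - \re(\lambda)) \leq 0$ for all $t \geq 0$, pinning down $|\lambda|^2 \leq \re(\lambda)$ (the $t$-coefficient condition implies the constant one). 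The reverse direction reverses this chain: given $\gamma$ of the claimed form with $|\lambda|^2 \leq \re(\lambda)$, the inequality $\re(\lambda) \geq |\lambda|^2 \geq 0$ makes $1 + t\lambda \neq 0$ for $t \geq 0$, and the Lemma \ref{corners} bound $|w_t| \leq 1$ confirms $\Theta(I + t\Theta)^{-1}$ cp for every $t \geq 0$. The main obstacle will be the middle paragraph, where the Lemma \ref{FAF} reduction, the Lemma \ref{corner} identification of $F_{k,k} = X$, and the Schwarz saturation $\|F\| = 1$ must be sequenced carefully to kill $F_{12}$ and $F_{21}$.
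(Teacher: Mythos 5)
Your proof is correct and reaches all the stated conclusions, but at several key junctures it takes a genuinely different route from the paper's argument.

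\smallskip
\emph{Agreement at the start.} Both you and the paper begin by passing to $L_\Theta$, extracting the norm-one idempotent corner $\sigma$, feeding it through Proposition \ref{kk'} to get $k=k'$ and $\mu_i = r_i$, applying Lemma \ref{FAF} (or its consequence for $L_\vartheta$) to reduce $\sigma$ to a function of the top-left $k\times k$ block, compressing to $M_{2k}(\C)$, and invoking Lemma \ref{corner} to identify $\tsigma(B)=\tr(X^*B\Omega)X$.

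\smallskip
\emph{Where you diverge.} First, you derive $\sigma(B) = \tau_\sigma(B)F$ in one stroke by unwinding $\sigma^2=\sigma$ through the block-reduction identity $\sigma(M)=\sigma(E_kME_k)$; the paper instead labels the remaining blocks $\ell_1,\ell_2,\ell_3$, shows $\ell_i(B)=\tau(B)\ell_i(X)$, and treats each separately. Second, you kill the off-diagonal blocks $F_{12}$ and $F_{21}$ via a Schwarz/norm-saturation argument: the inequality $FF^* \leq \rho(FF^*)I_n$ combined with $\|F\|=1$ and $\sum\mu_i = 1$ forces $(FF^*)_{ii}=1$ for $i\leq k$, and a zero diagonal entry in the positive matrix $I_n - FF^*$ wipes out its row and column. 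The paper instead observes directly that the $2\times 2$ block norm $\|\sigma(\iota(X))\|\leq 1$ with a unitary $X$ in the upper-left corner forces $\ell_1(X)=\ell_2(X)=0$. Both arguments are sound and of comparable length; yours has the merit of making the ``saturation'' forcing mechanism more explicit. Third — and this is the most substantive difference — for the scalar constraint $|\lambda|^2\leq\re(\lambda)$ you rescale $\Theta(I+t\Theta)^{-1}$ by $1+t$ and reduce everything to the single question of whether $w_t\tau_\sigma(\cdot)F$ is a corner from $\phi$ to $\psi$; you then compute (via the tensor factorization $C = \bar X\otimes F$, valid because $X$ commutes with $\Omega$ implies $\bar X$ commutes with $D_\mu$) that the Lemma \ref{corners} contraction matrix has norm exactly $|w_t|$, so the $q$-corner condition is precisely $|w_t|\leq 1$ for all $t\geq 0$. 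The paper instead carries out an explicit decomposition of $\vartheta(I+t\vartheta)^{-1}$ into the three families of maps $\Theta_t$, $L_t$, $\Upsilon_t$, verifies complete positivity of each, and also isolates the contribution $\Upsilon_t - L_t = \rho(\cdot)(I_{n-k}-EE^*)\oplus 0$. Your route is cleaner and a single Lemma \ref{corners} invocation replaces three separate cp checks; note, however, that the paper's decomposition and the residual term $\Upsilon_t-L_t$ are then reused directly in the proof of Theorem \ref{bigone} to manufacture the strict $q$-subordinate $\vartheta'$, so bypassing it here costs you set-up there. One small point to spell out when writing this up: you need $\lambda \neq 0$ before taking $t\to\infty$ in $t\gamma(I+t\gamma)^{-1}$; this follows because if $\tau_\gamma(F)=0$ then $(I+t\gamma)^{-1}=I-t\gamma$ and $t\gamma(I+t\gamma)^{-1}=t\gamma\to\infty$, contradicting the existence of $L_\Theta$.
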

\begin{proof}  Suppose that $\gamma$ is a nonzero $q$-corner
from $\phi$ to $\psi$, so $\vartheta: M_{n+n'}(\C) \rightarrow
M_{n + n'}(\C)$ below is $q$-positive:
\begin{displaymath}
\vartheta \left( \begin{array}{cc} A_{n, n} & B_{n , n'}
\\ C_{n' , n} & D_{n' ,n'} \end{array}   \right)
= \left( \begin{array}{cc} \phi(A_{n , n}) & \gamma(B_{n , n'})
\\ \gamma^*(C_{n' , n}) & \psi(D_{n' , n'}) \end{array}   \right) .
\end{displaymath}
We observe that
\begin{displaymath}
L_\vartheta \left( \begin{array}{cc} A_{n , n} & B_{n , n'}
\\ C_{n' , n} & D_{n' , n'} \end{array}   \right)
= \left( \begin{array}{cc} \phi(A_{n , n}) & \sigma(B_{n , n'})
\\ \sigma^*(C_{n' , n}) & \psi(D_{n' , n'}) \end{array}   \right),
\end{displaymath}
where by Lemma \ref{Lphi}, the map $\sigma : = \lim_{t \rightarrow
\infty} t \gamma(I + t \gamma)^{-1}$ is a corner of norm one from
$\phi$ to $\psi$ satisfying $\sigma^2=\sigma$,
$\range(\sigma)=\range(\gamma)$, and $\gamma \circ \sigma = \sigma
\circ \gamma = \gamma$.  Since $||\sigma||=1$, 
Proposition \ref{kk'} implies  $k=k'$ and $r_i = \mu_i$ for all
$i=1, \ldots, k$.

We observe that $L_\vartheta (E)=0$ for the projection 
$$E = \Big(\sum_{i=k+1}^n e_{ii} + \sum_{i=n+k'+1}^{n+n'} e_{ii}\Big) \in M_{n+n'}(\C).$$  Therefore,
$L_\vartheta(A)=L_\vartheta\Big((I-E)A(I-E)\Big)$ for all $A \in M_{n+n'}(\C)$ 
by Lemma \ref{FAF}.
In particular, $\sigma$ satisfies
\begin{displaymath}
\sigma \left(  \begin{array}{cc} 0_{k,k} & W_{k, n'-k}
\\ Q_{n-k,k} & Y_{n-k, n'-k}     \end{array} \right) \equiv 0.
\end{displaymath}
In other words, $\sigma$ depends only on its top left $k \times k$
minor, so for some $\tsigma: M_k(\C) \rightarrow M_k(\C)$
and some maps $\ell_i$ from $M_k(\C)$ into the appropriate
matrix spaces, we have
\begin{displaymath}
\sigma \left(  \begin{array}{cc} B_{k,k} & W_{k, n'-k}
\\ Q_{n-k,k} & Y_{n-k, n'-k}     \end{array} \right)
=
\left(  \begin{array}{cc} \tsigma(B_{k,k}) & \ell_1(B_{k,k})
\\ \ell_2(B_{k,k}) & \ell_3(B_{k,k})
   \end{array} \right).
\end{displaymath}
From the facts
$\sigma^2=\sigma$ and $||\sigma||=1$, it follows that
$\tsigma^2=\tsigma$ and $||\tsigma||=1$.

Let $\tphi: M_{k}(\C) \rightarrow M_k(\C)$
be the map $$\tphi(A) = \rho(A)I_k = (\sum_{i=1}^k \mu_i a_{ii})I_k$$ for all $A=
(a_{ij}) \in M_k(\C)$.  Define $\Theta: M_{2k}(\C) \rightarrow M_{2k}(\C)$ by
\begin{displaymath}
\Theta \left( \begin{array}{cc} A_{k,k} & B_{k,k}
\\ C_{k,k} & D_{k,k}  \end{array} \right)
= \left( \begin{array}{cc} \tphi(A_{k,k}) & \tsigma(B_{k,k})
\\ \tsigma ^*(C_{k,k}) & \tpsi(D_{k,k})  \end{array} \right),
\end{displaymath}
and let
\begin{displaymath}
S = \left( \begin{array}{cccc} I_{k,k} & 0_{k,n-k} & 0_{k,k} &
0_{k, n'-k} \\  0_{k,k} & 0_{k,n-k} & I_{k,k} &
0_{k, n'-k}\end{array} \right)
\in M_{2k,n+n'}(\C).
\end{displaymath}
Note that
$$\Theta(N) = S L_\vartheta(S^*NS)S^*$$ for all $N \in M_{2k}(\C)$,
so $\Theta$ is completely positive.  Therefore, $\tsigma$ is a norm
one corner
from $\tphi$ to $\tphi$.
Since $||\tsigma||=1$ and $\tsigma^2=\tsigma$,
Lemma \ref{corner} implies that
for some unitary $X \in M_k(\C)$ that commutes with $\Omega$, we have
\begin{equation}\label{tsig}
\tsigma(B) = \tr(X^* B \Omega)X \end{equation} for all $B \in M_k(\C)$.
For simplicity of notation in what follows, let $\tau \in M_k(\C)^*$ be the functional
$\tau(B) = \tr(X^* B \Omega).$

We claim that $\ell_1=\ell_3 \equiv 0$.  For this, let
\begin{equation} \label{M}
M = \left(  \begin{array}{cc} B_{k,k} & W_{k, n'-k}
\\ Q_{n-k,k} & Y_{n-k, n'-k}     \end{array} \right) \in M_{n,n'}(\C)
\end{equation}
be arbitrary.  We will suppress the subscripts for $B, Q, W$,
and $Y$ for the remainder of the proof.  From \eqref{tsig} and the fact
that $\sigma^2(M)=\sigma(M)$, we have \begin{equation} \label{elli}
\ell_i(B) = \ell_i(\tsigma(B))=\ell_i(\tau(B)X) = \tau(B)\ell_i(X) \end{equation}
for $i=1,2,3$.  Since $\sigma$ is a contraction, it follows that
\begin{displaymath}
1 \geq \Big| \Big| \sigma \left(  \begin{array}{cc} X & 0
\\ 0 & 0  \end{array} \right)\Big|\Big| =\Big|\Big| \left(  \begin{array}{cc} X & \ell_1(X)
\\ \ell_2(X) & \ell_3(X)  \end{array} \right)\Big|\Big|.
\end{displaymath}
But $X$ is unitary, so the line above implies
that $\ell_1(X) = \ell_2(X)=0$,
hence $\ell_1= \ell_2 \equiv 0$ by \eqref{elli}.   Let
$E=\ell_3(X) \in M_{n-k, n'-k}(\C)$, noting that $||E|| \leq 1$
since $\sigma$ is a contraction.  Therefore, $\sigma$ has the form
\begin{displaymath}
\sigma \left(  \begin{array}{cc} B & W
\\ Q & Y     \end{array} \right)
= \tau(B) \left( \begin{array}{cc} X & 0_{k, n'-k}
\\ 0_{n-k,k} & E
 \end{array} \right).
\end{displaymath}
Since $\gamma = \gamma \circ \sigma$ and
\begin{displaymath} \range(\gamma)=\range(\sigma)= \Big\{c
\left(  \begin{array}{cc} X & 0 \\ 0 & E   \end{array} \right): c \in \C
\Big\},
\end{displaymath} we have
\begin{eqnarray*}
\gamma \left(  \begin{array}{cc} B & W
\\ Q & Y      \end{array} \right)
& = & \gamma\Big(\sigma\left(  \begin{array}{cc} B & W
\\ Q & Y     \end{array} \right)\Big)
=
\gamma \left( \tau(B) \Big(\begin{array}{cc} X & 0 \\ 0 & E
\end{array} \Big) \right)
= \tau(B) \
\gamma \left(  \begin{array}{cc} X & 0
\\ 0 & E     \end{array} \right)
\\ & = & \tau(B)  \Big[ \lambda
\left(  \begin{array}{cc} X & 0 \\ 0 & E   \end{array} \right) \Big]
= \lambda \tau(B)  \left(  \begin{array}{cc} X & 0 \\ 0 & E    \end{array} \right)
\end{eqnarray*}
for some $\lambda \in \C$.  Since $\gamma$ is a nonzero $q$-corner between
unital completely positive maps and is thus necessarily
a contraction with no negative eigenvalues, we have
$\lambda \nleq 0$ and $|\lambda| \leq 1$.

In summary: we have proved that if $\gamma$ is a nonzero
$q$-corner,
then it is of the form
\begin{displaymath} \gamma
\left(  \begin{array}{cc} B & W
\\ Q & Y      \end{array} \right) = \lambda \ \tr(X^* B \Omega)
\left(  \begin{array}{cc} X & 0 \\ 0 & E   \end{array} \right)
\end{displaymath}
for some $\lambda \nleq 0$ with $|\lambda| \leq 1$, where $X$ and $E$ satisfy the conditions stated
in the theorem.
To complete the proof, we show
that such a map $\gamma$ is a $q$-corner if and only if $|\lambda|^2 \leq \re(\lambda)$.

Straightforward computations show that for all $t \geq 0$,
\begin{displaymath}
(I + t \gamma)^{-1}\left(  \begin{array}{cc} B & W
\\ Q & Y     \end{array} \right)=
\left(  \begin{array}{cc}
B - \frac{t \lambda \tau(B)}{1+t\lambda} X & W \\
Q & Y - \frac{t\lambda \tau(B)}{1+t \lambda} E  \end{array} \right)
\end{displaymath}
and
\begin{displaymath}
\gamma(I + t \gamma)^{-1}\left(  \begin{array}{cc} B & W
\\ Q & Y     \end{array} \right)=
\left(  \begin{array}{cc} \frac{\lambda \tau(B)}{1+t\lambda} X & 0 \\
0 & \frac{\lambda \tau(B)}{1+t \lambda} E   \end{array} \right)
= \frac{\lambda}{1+t \lambda} \gamma \left(  \begin{array}{cc} B & W
\\ Q & Y     \end{array} \right).
\end{displaymath}

For each $t \geq 0$, define maps $\Theta_t: M_{2k}(\C) \rightarrow M_{2k}(\C),$
$L_t: M_{2k}(\C) \rightarrow M_{n+n'-2k}(\C)$,
and $\Upsilon_t: M_{2k}(\C) \rightarrow M_{n+n'-2k}(\C)$ by

\begin{displaymath} \Theta_t \left( \begin{array}{cc} A & B
\\ C & D \end{array} \right) =
\left( \begin{array}{cc}
\frac{1}{1+t} \rho(A)I_{k,k} & \frac{\lambda}{1+t \lambda} \tau(B) X
\\ \frac{\bar{\lambda}}{1+t \bar{\lambda}} \tau^*(C) X^* & \frac{1}{1+t} \rho(D)I_{k,k}
 \end{array} \right),
\end{displaymath}

\begin{displaymath}
L_t \left( \begin{array}{cc} A & B
\\ C & D \end{array} \right) = \left( \begin{array}{cc}
\frac{1}{1+t} \rho(A)EE^* & \frac{\lambda}{1+t \lambda} \tau(B) E
\\ \frac{\bar{\lambda}}{1+t \bar{\lambda}} \tau^*(C) E^* & \frac{1}{1+t} \rho(D)I_{n'-k, n'-k}
 \end{array} \right),
\end{displaymath}
and
\begin{displaymath}
\Upsilon_t \left( \begin{array}{cc} A & B
\\ C & D \end{array} \right) =
L_t \left( \begin{array}{cc} A & B
\\ C & D \end{array} \right) + \left( \begin{array}{cc}
\frac{1}{1+t} \rho(A)(I_{n-k,n-k}-EE^*) & 0_{n-k,n'-k} \\ 0_{n'-k,n-k} & 0_{n'-k,n'-k}
 \end{array} \right).
\end{displaymath}
Let
\begin{displaymath}
T= \left( \begin{array}{cccc} 0_{n-k,k} & I_{n-k, n-k} & 0_{n-k,k} & 0_{n-k,n'-k}
\\ 0_{n'-k,k} & 0_{n'-k,n-k} & 0_{n'-k,k} & I_{n'-k,n'-k}  \end{array} \right)
\in M_{n+n'-2k, n+n'}(\C),
\end{displaymath}
and let $M \in M_{n+n'}(\C)$
be arbitrary, writing
\begin{displaymath}
M = \left( \begin{array}{cccc} A_{k,k} & q_{k,n-k} & B_{k,k} & r_{k,n'-k}
\\ s_{n-k,k} & t_{n-k,n-k} & u_{n-k,k} & v_{n-k,n'-k}
\\ C_{k,k} & w_{k,n-k} & D_{k,k} & c_{k, n'-k}
\\ d_{n'-k,k} & e_{n'-k,n-k} & f_{n'-k,k} & g_{n'-k, n'-k}, \end{array} \right), \
\textrm{ so } \ SMS^* = \left( \begin{array}{cccc} A_{k,k} & B_{k,k} \\ C_{k,k} & D_{k,k} 
\end{array} \right).
\end{displaymath}
For every $t \geq 0$, we have
\begin{eqnarray} \nonumber
\vartheta(I + t \vartheta)^{-1}(M)
& = & \left( \begin{array}{cccc}
\frac{1}{1+t} \rho(A)I_{k,k} & 0_{k,n-k} & \frac{\lambda}{1+t \lambda} \tau(B) X & 0_{k,n-k}
\\ 0_{n-k,k} & \frac{1}{1+t} \rho(A)I_{n-k,n-k} & 0_{n-k,k} & \frac{\lambda}{1+t \lambda} \tau(B) E
\\ \frac{\bar{\lambda}}{1+t \bar{\lambda}} \tau^*(C) X^* & 0_{k,n-k} & \frac{1}{1+t}\rho(D)I_{k,k} & 0_{k, n'-k}
\\ 0_{n'-k,k} & \frac{\bar{\lambda}}{1+t \bar{\lambda}} \tau^*(C)E^*
& 0_{n'-k,k} & \frac{1}{1+t}\rho(D)I_{n'-k, n'-k} \end{array} \right)
\\ \label{oneq} & = & S^* \Theta_t(SMS^*)S + T^* \Upsilon_t(SMS^*) T.
 \end{eqnarray}
Note also that for all $N \in M_{2k}(\C)$,
\begin{equation}
\label{twoeq} \Theta_t(N) = S \Big(\vartheta(I + t \vartheta)^{-1}(S^*NS)\Big) S^*
, \ \ \Upsilon_t(N) = T \Big(\vartheta(I + t \vartheta)^{-1}(S^*NS)\Big) T^*.
\end{equation}
It follows from \eqref{oneq} and \eqref{twoeq} that $\vartheta$ is $q$-positive
if and only if
$\Theta_t$ and $\Upsilon_t$ are completely positive for all $t \geq 0$.

We may easily argue as in the proof of Lemma \ref{corner} to conclude that
$\Theta_t$ is completely positive for all $t \geq 0$ if and only if the maps $\eta_t'':
M_{2k}(\C) \rightarrow M_{2}(\C)$ below are
completely positive for all $t \geq 0$:
\begin{displaymath}
\eta_t '' \left( \begin{array}{cc} A & B
\\ C & D \end{array} \right) = \left( \begin{array}{cc}
\frac{1}{1+t} \rho(A) & \frac{\lambda}{1+t \lambda} \tau(B)
\\ \frac{\bar{\lambda}}{1+t \bar{\lambda}} \tau^*(C) & \frac{1}{1+t} \rho(D)
 \end{array} \right).
\end{displaymath}
Recall that in the proof of Lemma \ref{corner}, we showed
that $\tau$ is a corner from $\rho$ to $\rho$.  Since $||\rho||=||\tau||=1$, it follows from 
Lemma \ref{corners}
that $c \tau$ is a corner from $\rho$ to $\rho$ if and only if $|c| \leq 1$.  Since
\begin{displaymath}
(1+t) \eta_t '' \left( \begin{array}{cc}   A & B \\
C & D
\end{array} \right)
=
\left( \begin{array}{cc}   \rho(A) &
\frac{\lambda(1+t)}{1+t \lambda} \tau(B) \\ \frac{\bar{\lambda}(1+t)}
{1+ \bar{\lambda}} \tau^*(C) & \rho(D)
\end{array} \right),
\end{displaymath}
we see that $\eta_t ''$ is completely positive for all $t \geq 0$ if and only if
$$\Big|\frac{\lambda(1+t)}{1+t \lambda}\Big| \leq 1 \ \ \ (\textrm{where we already know } \lambda \nleq 0 
\textrm{ and } |\lambda| \leq 1)$$
for all $t \geq 0$.
Squaring both sides of the above equation and then cross multiplying gives us
$$|\lambda|^2(1+2t+t^2) \leq 1 + 2t \re(\lambda) + |\lambda|^2 t^2, \ \ (\lambda \nleq 0, \
|\lambda| \leq 1)$$
which is equivalent to
\begin{equation}\label{lambdat} |\lambda|^2 \leq \frac{1 + 2t \re(\lambda)}{1 +
2t} \ \ (\lambda \nleq 0, \
|\lambda| \leq 1) \end{equation} for all nonnegative $t$.  Note that if $|\lambda|^2 \leq \re(\lambda)$,
then $\re(\lambda) \leq 1$ and equation \eqref{lambdat} holds for $t \geq 0$.  On the other hand,
suppose that $\lambda$ is any complex number that satisfies \eqref{lambdat} for all $t \geq 0$.
We conclude immediately that $\re(\lambda) > 0$, whereby the fact that
$|\lambda| \leq 1$ implies $\re(\lambda) \in
(0,1]$.  A computation shows that the net
$\{\frac{1 + 2t \re(\lambda)}{1 + 2t}\}_{t \geq 0}$ is monotonically
decreasing and converges to $\re(\lambda)$, hence $|\lambda|^2 \leq \re(\lambda)$
by \eqref{lambdat}.  We have now shown that $\eta_t ''$ (and thus $\Theta_t$) is completely positive for all $t \geq 0$
if and only if $|\lambda|^2 \leq \re(\lambda)$.  Therefore, if $|\lambda|^2 > \re(\lambda)$
then \eqref{twoeq} implies that
$\vartheta$ is not $q$-positive, which is to say that $\gamma$ is not a $q$-corner
from $\phi$ to $\psi$.

Suppose that $|\lambda|^2 \leq \re(\lambda)$.  Then from above, the maps $\{\Theta_t\}_{t \geq 0}$
are all completely positive.  Let
\begin{displaymath} G=
\left( \begin{array}{cc}
E & 0_{n-k, n'-k}
\\ 0_{n'-k, n'-k} & I_{n'-k}
\end{array} \right) \in M_{n+n'-2k, 2n' - 2k}(\C).
 \end{displaymath}
We observe that
\begin{eqnarray*}
(1+t) L_t \left( \begin{array}{cc} A & B
\\ C & D \end{array} \right)
 & = & G \left(
\begin{array}{cc} \rho(A)I_{n'-k} &
\frac{\lambda(1+t)}
{1+t \lambda} \tau(B) I_{n'-k} \\ \frac{\bar{\lambda}
(1+t)}
{1 + t \bar{\lambda}} \tau^*(C) I_{n'-k} &
\rho(D)I_{n'-k} \end{array} \right)
G^*,
\end{eqnarray*}
where we have already shown that the map in the middle is completely
positive since $|\lambda|^2 \leq \re(\lambda)$.  Thus, $L_t$ is completely
positive for every $t \geq 0$.  Also, $\Upsilon_t - L_t$
has the form
\begin{displaymath}
(\Upsilon_t - L_t)
\left( \begin{array}{cc} A & B
\\ C & D \end{array} \right) =
\left( \begin{array}{cc} \rho(A)(I_{n-k}-EE^*) & 0_{n-k,n'-k}
\\ 0_{n'-k,n-k} & 0_{n'-k,n'-k} \end{array} \right),
\end{displaymath}
where the right hand side is completely positive since $||E|| \leq 1$.  Therefore,
the maps $\{\Upsilon_t \}_{t \geq 0}$ are all completely positive,
so \eqref{oneq} implies that $\vartheta(I + t \vartheta)^{-1}$ is completely positive
for all $t \geq 0$, hence
$\gamma$ is a $q$-corner from $\phi$ to $\psi$.
\end{proof}

\begin{thm} \label{bigone}
Assume the notation of the previous theorem, and suppose that $k=k'$ and $\mu_i=r_i$
for all $i=1, \ldots, k$.  A $q$-corner $\gamma: M_{n,n'}(\C)
\rightarrow M_{n,n'}(\C)$ from $\phi$ to $\psi$ is hyper maximal if and only if
$n=n'$, $0<|\lambda|^2 = \re(\lambda)$, and $E$
is unitary.
\end{thm}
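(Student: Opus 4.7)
The plan is to treat the two directions separately, after first reducing to a canonical form via Remark 1. When $E$ is unitary and $n = n'$, setting $U = I_n$ and $V = \operatorname{diag}(X^*, E^*)$ works: because $X$ commutes with $\Omega$ and $E$ is unitary, $V^*\Omega_{\mathrm{ext}} V = \Omega_{\mathrm{ext}}$, so $\phi_U = \phi$ and $\psi_V = \psi$; and a direct computation yields the transformed corner $\gamma'(B) = \lambda\,\rho(B)\,I_n$. Thus in canonical form, $\vartheta$ takes the tensorial Hadamard shape $\vartheta(M) = (M_\lambda \odot \rho^{(2)}(M)) \otimes I_n$ with $M_\lambda = \begin{pmatrix}1 & \lambda \\ \bar\lambda & 1\end{pmatrix}$ and $\rho^{(2)}(M) = (\rho(M_{ij}))_{i,j=1,2}$.

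For sufficiency, assume $n = n'$, $|\lambda|^2 = \re(\lambda) > 0$, and $E$ unitary, and suppose $\vartheta' \leq_q \vartheta$ has corner $\gamma$. The heart of the argument is to show that $\vartheta'$ must also have the tensorial shape $\vartheta'(M) = (M' \odot \rho^{(2)}(M)) \otimes I_n$ with $M' = \begin{pmatrix}a & \lambda \\ \bar\lambda & d\end{pmatrix}$ for some $a, d \in [0, 1]$. This reduction is obtained by passing to the idempotent $L_{\vartheta'}$: multiplying the $q$-subordination inequality by $t$ and sending $t \to \infty$ gives $L_{\vartheta'} \leq L_\vartheta$ in the completely positive order; combining this with the explicit range of $L_\vartheta$ (equal to $M_2 \otimes \C I_n$) together with the identities of Lemma \ref{Lphi} and Lemma \ref{FAF}, one argues that $\vartheta' = \vartheta' \circ L_{\vartheta'}$ factors through the same subspace. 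Once the Hadamard form is established, the conditions $\vartheta \geq_q \vartheta'$ and $\vartheta' \geq_q 0$ translate to $a, d \leq 1$ and to $ad \geq |\lambda|^2$ together with $2ad\,\re(\lambda) \geq (a+d)|\lambda|^2$, via the same Hadamard matrix computation as in the proof of Theorem \ref{biggie}. Substituting $|\lambda|^2 = \re(\lambda)$ reduces the second inequality to $(2a-1)(2d-1) \geq 1$, which in $[0,1]^2$ forces $a = d = 1$ (since $a = d = 0$ is ruled out by $ad > 0$). Hence $\vartheta' = \vartheta$.

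For necessity, we prove the contrapositive by producing a proper $q$-subordinate whenever any of the conditions fails. If $|\lambda|^2 < \re(\lambda)$, the map $\vartheta_c := \begin{pmatrix}c\phi & \gamma \\ \gamma^* & c\psi\end{pmatrix}$ is a proper $q$-subordinate of $\vartheta$ for each $c \in [|\lambda|^2/\re(\lambda),\,1)$: the same Hadamard matrix analysis (with $a = d = c$) gives $\vartheta_c \geq_q 0$, while the difference $\vartheta(I+t\vartheta)^{-1} - \vartheta_c(I+t\vartheta_c)^{-1}$ has the diagonal Hadamard matrix $\operatorname{diag}(\tfrac{1}{1+t}-\tfrac{c}{1+tc},\ \tfrac{1}{1+t}-\tfrac{c}{1+tc})$, which is PSD for $c \leq 1$. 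If $E$ is not unitary, say $EE^* \neq I_{n-k}$ (the case $E^*E \neq I_{n'-k}$ is symmetric), set $Q := I_{n-k}-EE^* > 0$ and $\phi_Q(A) := \rho(A)\operatorname{diag}(0_{k,k}, Q)$; then $\vartheta - \epsilon\begin{pmatrix}\phi_Q & 0 \\ 0 & 0\end{pmatrix}$ is a proper $q$-subordinate of $\vartheta$ for all sufficiently small $\epsilon > 0$, as seen from the decomposition $\vartheta(I+t\vartheta)^{-1} = S^*\Theta_t(\cdot)S + T^*\Upsilon_t(\cdot)T$ in the proof of Theorem \ref{biggie}: the summand $\Upsilon_t - L_t$ contains the factor $I - EE^*$, providing the slack needed to absorb the perturbation. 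The remaining case $n \neq n'$ is subsumed because a contraction between mismatched rectangular dimensions is never unitary. The principal technical obstacle is the Hadamard-reduction step in sufficiency, which requires carefully connecting the range of a general $q$-subordinate to the range of the idempotent $L_\vartheta$ using the rank-one structure of $\phi$ and $\psi$.
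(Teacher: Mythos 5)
Your strategy is sound in outline and your Hadamard-matrix computations in the interior steps are correct, but the step you yourself flag as the ``principal technical obstacle'' --- the reduction of an arbitrary $q$-subordinate $\vartheta'$ with corner $\gamma$ to the Hadamard form $\vartheta'(M) = \bigl(\begin{smallmatrix}a & \lambda\\ \bar\lambda & d\end{smallmatrix}\bigr)\odot\rho^{(2)}(M) \otimes I_n$ --- is a genuine gap, and the route you sketch for it does not close.

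The chain you propose ($L_{\vartheta'} \le L_\vartheta$ in the CP order, Lemma~\ref{Lphi}, Lemma~\ref{FAF}) controls only the \emph{nullspace} of $L_{\vartheta'}$: it shows $\vartheta'$ depends only on the surviving $2k\times 2k$ minor of its argument. It says nothing about the \emph{range} of the diagonal block $\phi'$. From $L_{\phi'} \le L_\phi = \rho(\cdot)I_n$ alone you cannot conclude that $L_{\phi'}(A)$ is a scalar multiple of $I_n$; indeed $\phi'(A) = \rho(A)\,\mathrm{diag}(I_k, EE^*)$ is a perfectly good $q$-subordinate of $\phi$ whose range is \emph{not} $\C I_n$ when $E$ is not unitary --- that is exactly the counterexample used in the necessity direction. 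The only hypothesis that rules this out in the sufficiency direction is the unitarity of $E$ (equivalently of $Z = \mathrm{diag}(X, E)$), and your sketch never brings that hypothesis to bear. The paper's proof of precisely this point applies $L_{\chi}$ to the positive matrix $\bigl(\begin{smallmatrix}I & Z\\ Z^* & I\end{smallmatrix}\bigr)$, uses $\sigma(Z) = Z$ (idempotence), and, because $Z$ is unitary, derives a contradiction from $L_{\phi'}(I) \lneq I$ via a vector of the form $(f, -Z^*f)$. That gives $L_{\phi'}(I) = I$, hence $L_{\phi'} = \phi$ and then $\phi' = r\phi$ for a scalar $r$; a further bound involving $\frac{\lambda}{1+t\lambda}$ forces $r = 1$. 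Your two remaining inequalities $(2a-1)(2d-1)\ge 1$ (with $a, d \in (0,1]$, $ad > 0$) correctly force $a=d=1$ \emph{once the Hadamard form is in hand}, and would in fact subsume the paper's scalar step $r = 1$ and handle both diagonal blocks simultaneously rather than one at a time. So the endgame is fine; the missing piece is the argument that pins $\phi'$ and $\psi'$ down to scalar multiples of $\phi$, and for that you need the unitary-$Z$ positivity trick (or an equivalent), not just nullspace bookkeeping with $L_{\vartheta'}$.

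The rest of the proposal is essentially correct and in places tidier than the paper. The canonical-form reduction via Remark~1 with $V = \mathrm{diag}(X^*, E^*)$ (valid because $X$ commutes with $\Omega$ and $E$ is unitary) is a genuine simplification, collapsing $\gamma$ to $B \mapsto \lambda\rho(B)I_n$. The necessity direction is sound: the $\vartheta_c$ construction with $c \in [|\lambda|^2/\re(\lambda), 1)$ parallels the paper's $\vartheta''$ with $a = |\lambda|^2/\re(\lambda)$; the $E$-not-unitary subordinate $\vartheta - \epsilon(\cdots)$ is a small-perturbation variant of the paper's one-shot choice $\phi'(R) = \phi(R)\,\mathrm{diag}(I_k, EE^*)$ (your $\epsilon = 1$ recovers it exactly, so the small-$\epsilon$ hedge is unnecessary); and your observation that $n\ne n'$ is subsumed by ``$E$ not unitary'' because a rectangular contraction can never be unitary is correct. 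One small slip: with $EE^* \neq I_{n-k}$ you should write $Q := I_{n-k} - EE^* \ge 0$, $Q \ne 0$, rather than $Q > 0$, since $Q$ need not be strictly positive.
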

\begin{proof}  We first show that $\gamma$ is not hyper maximal if $n \neq n'$,
regardless of the assumptions for $\lambda$ or $E$.
If $n>n'$, then $EE^* \in M_{n-k}(\C)$ is a positive contraction of rank
at most $n'-k$, so $EE^* \neq I_{n-k}$.

Define $\phi': M_{n}(\C) \rightarrow M_{n}(\C)$
by
\begin{displaymath} \phi'(R)=\phi(R) \left( \begin{array}{cc}
I_{k,k} & 0_{k, n-k} \\ 0_{n-k,k} & EE^*
\end{array} \right), \end{displaymath}
observing that $\phi'(I + t \phi')^{-1} = (1/(1+t)) \phi'$ for all $\geq 0$.
Define $\vartheta': M_{n+n'}(\C)
\rightarrow M_{n+n'}(\C)$ by
\begin{displaymath}
\vartheta'
= \left( \begin{array}{cc} \phi' & \gamma
\\ \gamma^* & \psi  \end{array} \right),
\end{displaymath}
noting that $\vartheta'$ has no negative eigenvalues.
Writing each $M \in M_{n+n'}(\C)$ in the form \eqref{M}, we see
\begin{eqnarray} \nonumber
\vartheta'(I + t \vartheta')^{-1}(M)
& = & \left( \begin{array}{cccc}
\frac{1}{1+t} \rho(A)I_{k,k} & 0_{k,n-k} & \frac{\lambda}{1+t \lambda} \tau(B) X & 0_{k,n-k}
\\ 0_{n-k,k} & \frac{1}{1+t} \rho(A)EE^* & 0_{n-k,k} & \frac{\lambda}{1+t \lambda} \tau(B) E
\\ \frac{\bar{\lambda}}{1+t \bar{\lambda}} \tau^*(C) X^* & 0_{k,n-k} & \frac{t}{1+t}\rho(D)I_{k,k} & 0_{k, n'-k}
\\ 0_{n'-k,k} & \frac{\bar{\lambda}}{1+t \bar{\lambda}} \tau^*(C)E^*
& 0_{n'-k,k} & \frac{1}{1+t}\rho(D)I_{n'-k, n'-k} \end{array} \right)
\\ & = &  \label{vee'} S^* \Theta_t(SMS^*)S + T^* L_t(SMS^*) T.
 \end{eqnarray}
for every $t \geq 0$, hence $\vartheta'$ is $q$-positive.
By \eqref{oneq} and \eqref{vee'}, we have
$$\Big(\vartheta(I + t \vartheta)^{-1}
- \vartheta'(I + t \vartheta')^{-1}\Big)(M) = T \Big(\Big(\Upsilon_t-L_t
\Big)(S^*MS)\Big) T^*.$$
Since $\Upsilon_t - L_t$ is completely positive
for all $t\geq 0$ (as shown in the previous proof), the above equation implies
that $\vartheta \geq_q \vartheta'$.  However, $\vartheta \neq \vartheta'$
since $EE^* \lneq I_{n-k}$, so $\gamma$ is not hyper maximal.

If $n< n'$, then since $E^*E \lneq I_{n'-k}$, we may replace
$\{L_t\}_{t=0}^\infty$ with the maps $\{R_t\}_{t=0}^\infty$ below
and argue analogously (this time cutting down $\psi$ using $E^*E$)
to show that $\gamma$ is not hyper maximal:
\begin{displaymath}
R_t \left( \begin{array}{cc} A_{k,k} & B_{k,k}
\\ C_{k,k} & D_{k,k}  \end{array} \right) =
\left( \begin{array}{cc} \frac{1}{1+t}\rho(A) I_{n-k} &  \frac{\lambda}
{1+t \lambda} \tau(B) E
\\ \frac{\bar{\lambda}}{1 + t \bar{\lambda}} \tau^*(C) E^* & \frac{1}{1+t}\rho(D)
E^*E \end{array} \right).
\end{displaymath}
Of course, if $n=n'$ but $E$ is not unitary, then $EE^* \lneq I_{n-k}$, and the same
argument given in the case that $n>n'$ shows that $\gamma$ is not hyper maximal.

Therefore, we may suppose for the remainder of the proof that $n=n'$ and
$E$ is unitary.  Note that $\phi = \psi$ since $n=n'$.  For some $a \in (0,1]$, we have $|\lambda|^2= a \re(\lambda)$.
We first show that $\gamma$ is not hyper maximal if $a \neq 1$.
We claim that the map
$\vartheta'': M_{2n}(\C) \rightarrow M_{2n}(\C)$ defined by
\begin{displaymath}
\vartheta'' \left( \begin{array}{cc} A_{n,n} & B_{n,n}
\\ C_{n,n} & D_{n,n}  \end{array} \right)
= \left( \begin{array}{cc} a \phi(A_{n,n}) & \gamma(B_{n,n})
\\ \gamma^*(C_{n,n}) & a \phi(D_{n,n})  \end{array} \right)
\end{displaymath}
satisfies $\vartheta'' \geq_q 0$.  For each $t \geq 0$, let $\eta_t^{(a)}:
M_{2k}(\C) \rightarrow M_{2}(\C)$ be the map
\begin{displaymath}
\eta_t^{(a)} \left( \begin{array}{cc} A & B
\\ C & D \end{array} \right) =
\left( \begin{array}{cc}
\frac{a}{1+at} \rho(A) & \frac{\lambda}{1+t \lambda} \tau(B)
\\ \frac{\bar{\lambda}}{1+t \bar{\lambda}} \tau^*(C) & \frac{a}{1+at} \rho(D)
\end{array} \right).
\end{displaymath}
It is routine to check
that since $\tau$ is a corner from $\rho$ to $\rho$,
the condition $|\lambda|^2 = a \re(\lambda)$ implies that
$\frac{\lambda}{1+t \lambda} \tau$ is a corner from $\frac{a}{1+at} \rho$ to
$\frac{a}{1+at} \rho$
for every $t \geq 0$, so $\eta_t^{(a)}$ is completely positive for all $t \geq 0$.
Defining $\Theta_t ^{(a)}$ and $\Upsilon_t^{(a)}$ for each $t \geq 0$ by
\begin{displaymath} \Theta_t^{(a)} \left( \begin{array}{cc} A & B
\\ C & D \end{array} \right) =
\left( \begin{array}{cc}
\frac{a}{1+at} \rho(A)I_{k} & \frac{\lambda}{1+t \lambda} \tau(B) X
\\ \frac{\bar{\lambda}}{1+t \bar{\lambda}} \tau^*(C) X^* & \frac{a}{1+at} \rho(D)I_{k}
 \end{array} \right)
\end{displaymath}
and
\begin{eqnarray*}
\Upsilon_t^{(a)} \left( \begin{array}{cc} A & B
\\ C & D \end{array} \right)
 & = & G \left(
\begin{array}{cc} \frac{a}{1+at} \rho(A)I_{n-k} &
\frac{\lambda}
{1+t \lambda} \tau(B) I_{n-k} \\ \frac{\bar{\lambda}
}
{1 + t \bar{\lambda}} \tau^*(C) I_{n-k} &
\frac{a}{1+at} \rho(D)I_{n-k} \end{array} \right)
G^* 
\\ & = & \left( \begin{array}{cc}
\frac{a}{1+at} \rho(A)I_{n-k} & \frac{\lambda}{1+t \lambda} \tau(B) E
\\ \frac{\bar{\lambda}}{1+t \bar{\lambda}} \tau^*(C) E^* & \frac{a}{1+at} \rho(D)I_{n-k}
 \end{array} \right) ,
\end{eqnarray*}
we observe that the maps $\{\Theta_t^{(a)}\}_{t \geq 0}$ and $\{\Upsilon_t^{(a)}\}_{t \geq 0}$
are all completely positive since $\eta_t^{(a)}$ is completely positive
for all $t \geq 0$.  Note that $$(a\phi)(I + t a\phi)^{-1} =
\frac{a}{1+at} \phi$$ for all $t \geq 0$, so
for every $M \in M_{2n}(\C)$, we have
$$\vartheta''(I + t \vartheta'')^{-1}(M) =
S \Big(\Theta_t^{(a)}(S^*MS) \Big) S^*
+ T^* \Big(\Upsilon_t ^{(a)}(S^*MS) \Big) T.$$
Therefore, $\vartheta'' \geq_q 0$, and trivially $\vartheta \geq_q \vartheta''$.
If $a \neq 1$, then $\vartheta'' \neq \vartheta$, hence $\gamma$ is not
hyper maximal.  To finish the proof, it suffices to show
that $\gamma$ is hyper maximal if $a=1$ (of course, maintaining
our assumption that $E$ is unitary).

Suppose $a=1$, and let $\phi'$ be any $q$-subordinate of $\phi$
such that
\begin{displaymath}
\chi := \left( \begin{array}{cc}  \phi' & \gamma
\\ \gamma^* & \phi
\end{array} \right) \geq_q 0.
\end{displaymath}
If $L_{\phi'}(I) \neq I$, then $L_{\phi'}(I)= R \lneq I$ for some positive
$R \in M_n(\C)$.  Letting $Z$ be the unitary matrix
\begin{displaymath}
Z= \left( \begin{array}{cc} X & 0_{k,n-k}
\\ 0_{n-k,k} & E \end{array} \right) \in M_{n-k}(\C),
\end{displaymath} we observe that
\begin{equation}\label{duh}
0 \leq L_\chi \left( \begin{array}{cc}  I & Z
\\ Z^* & I
\end{array} \right) = \left( \begin{array}{cc}  R & Z
\\ Z^* & I
\end{array} \right).
\end{equation}
Since $R \lneq I$, we have $(f,Rf) < 1$ for some unit vector $f \in \C^n$.
A quick calculation shows that
\begin{displaymath}
\Big\langle  \left( \begin{array}{c} f \\ -Z^* f \end{array} \right)
, \left( \begin{array}{cc}  R & Z
\\ Z^* & I
\end{array} \right) \left( \begin{array}{c} f \\ -Z^* f \end{array} \right)
\Big\rangle = (f, Rf) -1 <0,
\end{displaymath}
contradicting \eqref{duh}.

Therefore, $L_{\phi'}(I)=I$.  Since $\phi \geq_q \phi'$, it follows that $L_\phi -
L_{\phi'}$ is completely positive,  so $$||L_\phi - L_{\phi'}|| =||L_\phi(I) - L_{\phi'}(I)|| = 0,$$ hence
$L_{\phi'}(A)= L_\phi(A)=\phi(A) = \ell(A) I$ for the state $\ell \in M_{n}(\C)^*$ defined by
$\ell(A) = \sum_{i=1}^k \mu_i a_{kk}$.  But $\range(\phi')= \range(L_{\phi'})=\{cI: c \in \C\}$
and $\phi' = \phi' \circ L_{\phi'}$, so $\phi'(I)=rI$ for some $r \leq 1$ and
$$\phi'(A) = \phi'(L_{\phi'}(A)) = \phi( \ell(A)I)=\ell(A) \phi'(I) = r \ell(A) I= r \phi(A)$$
for all $A \in M_n(\C)$.

We claim that $r=1$.  To prove this, we define $V_t: M_{2k}(\C)
\rightarrow M_{2k}(\C)$ for each $t \geq 0$ by

\begin{displaymath}
V_t \left( \begin{array}{cc} A & B \\ C & D \end{array} \right)
= S \left( \chi(I + t \chi)^{-1} \Big[
S^* \Big( \begin{array}{cc} A & B \\ C & D \end{array} \Big)  S \Big]\right)
 S^* =
 \left( \begin{array}{cc} \frac{r}{1+rt} \rho(A)I_k & \frac{\lambda}{1+t \lambda} \tau(B)X
\\ \frac{\bar{\lambda}}{1+t \bar{\lambda}}\tau^*(C)X^* & \frac{1}{1+t} \rho(D)I_k \end{array} \right).
\end{displaymath}
Since $\chi \geq_q 0$, each $V_t$ is completely positive.  Therefore,
\begin{displaymath}
0 \leq \left( \begin{array}{cc} X^* & 0 \\ 0 & I  \end{array} \right)
\Big[ V_t \left( \begin{array}{cc} I & X \\ X^* & I \end{array} \right)
\Big]
\left( \begin{array}{cc} X & 0 \\ 0 & I  \end{array} \right)
= \left( \begin{array}{cc} \frac{r}{1+rt} I & \frac{\lambda}{1+t \lambda}
I  \\ \frac{\bar{\lambda}}{1+t \bar{\lambda}}I & \frac{1}{1+t} I  \end{array} \right),
\end{displaymath}
hence
$$\frac{r}{(1+rt)(1+t)} \geq \frac{|\lambda|^2}{|1+t \lambda|^2}
= \frac{\re(\lambda)}{1+(t^2+2t)\re(\lambda)}$$
for all $t \geq 0$.  This is equivalent to
\begin{equation}
\label{r} r \geq \frac{(1+t)\re(\lambda)}{1+t\re(\lambda)}\end{equation} for all $t \geq 0$.
We take the limit as $t \rightarrow \infty$
in \eqref{r} and observe $r \geq 1$.  Since $r \leq 1$ we have $r=1$, so $\phi'=\phi$.

We have shown that if
\begin{displaymath}
 \left( \begin{array}{cc}  \phi & \gamma
\\ \gamma^* & \phi
\end{array} \right) \geq_q  \left( \begin{array}{cc}  \phi' & \gamma
\\ \gamma^* & \phi
\end{array} \right) \geq_q 0,
\end{displaymath}
then $\phi=\phi'$.  An analogous argument shows that
if
\begin{displaymath}
 \left( \begin{array}{cc}  \phi & \gamma
\\ \gamma^* & \phi
\end{array} \right) \geq_q  \left( \begin{array}{cc}  \phi & \gamma
\\ \gamma^* & \phi'
\end{array} \right) \geq_q 0,
\end{displaymath} then $\phi=\phi'$.  Therefore, $\gamma$ is hyper maximal.
\end{proof}

We are now ready to prove the following:
\begin{thm} \label{theone}
Let $\phi: M_n(\C) \rightarrow M_n(\C)$ and $\psi: M_{n'}(\C)
\rightarrow M_{n'}(\C)$ be rank one unital $q$-positive maps, and
let $\nu$ be a type II Powers weight of the form $$\nu(\sqrt{I -
\Lambda(1)}B \sqrt{I - \Lambda(1)}) = (f,Bf).$$
The $E_0$-semigroups induced by $(\phi, \nu)$ and $(\psi, \nu)$
are cocycle conjugate if and only if $n=n'$ and $\phi$ is conjugate to $\psi$.
\end{thm}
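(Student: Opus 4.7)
The plan is to split into the two implications and reduce to results already in the paper.

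\textbf{Sufficiency.} If $n=n'$ and $\phi$ is conjugate to $\psi$, then Proposition \ref{arrgh} directly produces the required cocycle conjugacy of the induced $E_0$-semigroups. This direction is immediate.

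\textbf{Necessity.} Assume $(\phi,\nu)$ and $(\psi,\nu)$ induce cocycle conjugate $E_0$-semigroups. Write $\phi(A)=\ell(A)I_n$ and $\psi(D)=\ell'(D)I_{n'}$ for states $\ell$ and $\ell'$ with eigenvalue lists $\{\lambda_i\}_{i=1}^k$ and $\{\mu_i\}_{i=1}^{k'}$. First, by Proposition \ref{hypqc}, cocycle conjugacy gives a hyper maximal $q$-corner $\gamma$ from $\phi$ to $\psi$. Second, by Proposition \ref{kk'}, cocycle conjugacy forces $k=k'$ and $\lambda_i=\mu_i$ for all $i$.

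To apply Theorem \ref{bigone}, I would use Remark 1 to reduce to diagonal states: pick unitaries $U\in M_n(\C)$ and $V\in M_{n'}(\C)$ so that, by \eqref{di}, $\phi_U$ and $\psi_V$ have the diagonal forms required in \eqref{forms} with common eigenvalue list $\{\lambda_i\}_{i=1}^k$. Remark 1 then provides a hyper maximal $q$-corner $B\mapsto U^*\gamma(UBV^*)V$ from $\phi_U$ to $\psi_V$. Theorem \ref{bigone} forces $n=n'$ (and also pins down the structure of the corner, though that is not needed here).

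Finally, since $n=n'$, $k=k'$, and the eigenvalue lists coincide, formula \eqref{di} yields
\begin{equation*}
\phi_U(A)=\Big(\sum_{i=1}^k\lambda_i a_{ii}\Big)I_n=\psi_V(A)
\end{equation*}
for every $A\in M_n(\C)$, i.e.\ $U^*\phi(UAU^*)U=V^*\psi(VAV^*)V$. Substituting $B=VAV^*$ and rearranging gives $\psi(B)=W^*\phi(WBW^*)W$ with $W=VU^*\in M_n(\C)$, so $\psi=\phi_W$ and $\phi$ is conjugate to $\psi$.

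The steps here are all plumbing: the substantive content is entirely contained in Propositions \ref{hypqc}, \ref{arrgh}, \ref{kk'} and Theorem \ref{bigone}. The only point requiring care is the bookkeeping in passing from the given hyper maximal $q$-corner to one between diagonalized maps, and then from the equality $\phi_U=\psi_V$ back to a conjugacy between the original $\phi$ and $\psi$; this is algebraic but must be done cleanly so that $n=n'$ is genuinely used to assert that the conjugating unitary lives in $M_n(\C)$.
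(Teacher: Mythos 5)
Your proof is correct and follows essentially the same route as the paper: sufficiency via Proposition \ref{arrgh}, and necessity by reducing to the diagonal forms of \eqref{forms}, extracting a hyper maximal $q$-corner between them via Proposition \ref{hypqc} and Remark 1, invoking Theorem \ref{bigone} (with the eigenvalue-list match from Proposition \ref{kk'} standing in for the first conclusion of Theorem \ref{biggie}) to force $n=n'$, and concluding $\phi_U=\psi_V$. One small slip in the final rearrangement: from $\phi_U=\psi_V$ one gets $\psi=\phi_W$ with $W=UV^*$ (equivalently $\phi=\psi_{VU^*}$, as in the paper), not $W=VU^*$; this does not affect the conclusion that $\phi$ and $\psi$ are conjugate.
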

\begin{proof} The backward direction follows trivially from Proposition
\ref{arrgh}.  For the forward direction, suppose $(\phi, \nu)$ and $(\psi, \nu)$
induce cocycle conjugate $E_0$-semigroups $\alpha^d$ and $\beta^d$.  
For some sets $\{\mu_i\}_{i=1}^k$ and $\{r_i\}_{i=1}^{k'}$ satisfying the conditions
of Theorem \ref{biggie}
and some unitaries $U \in M_n(\C)$ and $V \in M_{n'}(\C)$, $\phi_U$
and $\psi_V$ have the form of \eqref{forms}.  Let $\alpha_U ^d$ and $\beta_V ^d$
be the $E_0$-semigroups induced by $(\phi_U, \nu)$ and $(\psi_V, \nu)$, respectively.
Since $\alpha_U ^d \simeq \alpha^d$
and $\beta_V ^d \simeq \beta^d \simeq \alpha^d$, we have $\alpha_U ^d \simeq \beta_V ^d$,
so by Proposition \ref{hypqc}, there is a hyper maximal $q$-corner from $\phi_U$ to $\psi_V$.
Theorems \ref{biggie} and \ref{bigone} imply that $n=n'$, $k=k'$, and 
$\mu_i=r_i$ for all $i=1, \ldots, k$.  In other words, 
$\phi_U=\psi_V$.  Therefore, $\phi=\psi_{(VU^*)}$, so $\phi$ and $\psi$ are conjugate.
\end{proof}

\end{document}